\DeclareMathAlphabet\mathscr{LS1}{stixscr}{m}{n}
\newcommand{\Q}{\mathbb{Q}}
\newcommand{\Z}{\mathbb{Z}}
\newcommand{\N}{\mathbb{N}}
\newcommand{\C}{\mathbb{C}}
\newcommand{\R}{\mathbb{R}}
\newcommand{\bigh}{\mathscr{H}}
\newcommand{\M}{\mathscr{M}}
\newcommand{\h}{\mathscr{h}}
\newcommand\set[1]{\left\{#1\right\}}
\newcommand\br[1]{\left(#1\right)}
\newcommand\abs[1]{\left|#1\right|}
\newcommand\norm[1]{\lVert#1\rVert}
\newtheorem{theorem}{Theorem}
\makeatletter\@addtoreset{case}{theorem}\makeatother
\newtheorem{lemma}{Lemma}
\newtheorem{cor}{Corollary}
\newtheorem{proposition}{Proposition}
\title{On a conjecture of Levesque and Waldschmidt}
\author{
    Tobias Hilgart \small tobias.hilgart@plus.ac.at \\
    Volker Ziegler \small volker.ziegler@plus.ac.at \\
    \small University of Salzburg, Austria
}
\begin{document}

\maketitle

\section{Motivation and Statement}

Ever since the work of Baker \cite{Bak68}, we know how to effectively solve any Thue equation, i.e. a diophantine equation $F(X, Y) = m$ over the integers, where $F$ is an irreducible homogenous polynomial of degree at least $3$. And if we can solve one, we can solve a finite number of them.

Thomas \cite{Tho90} was amongst the first to study infinitely many (non-binary) Thue equations by looking at parametrised equations $F_n(X, Y) = \pm 1$, where $F_n \in \Z[n][X, Y]$ gives a Thue equation for each $n \in \N$. He proved that
\begin{equation}\label{eq: thomaspolynomial}
    F_n(X, Y) = X^3 - (n-1)X^2 Y - (n+2) XY^2 - Y^3 = \pm 1
\end{equation}
has only a few trivial solutions for large parameters $n$ while giving a complete list of all solutions for small parameters $n$. Said parameter can furthermore be taken to be an integer due to the symmetry $F_{-n-1}(X, Y) = F_n(-Y, -X)$.

Levesque and Waldschmidt \cite{LeWa15} took this equation one step further and \textit{twisted} it by an exponential parameter $t$ in the following way: We can write Thue equations as norm-form equations, e.g. we first factorise the polynomial $F_n(X,1)$ of Equation~\eqref{eq: thomaspolynomial} into its complex roots $\uplambda_0, \uplambda_1, \uplambda_2$ (and forego to denote their dependency on the parameter $n$). Then Equation~\eqref{eq: thomaspolynomial} can be written as
\[
    N_{K/\Q}\br{ X - \uplambda_0 Y } = \pm 1,
\]
where $K = \Q\br{\uplambda_0}$ and $N_{K/\Q}$ denotes the norm relative to $K/\Q$. Levesque and Waldschmidt \textit{twisted} this equation by an integer $t$ and looked, among other things, at the equation
\[
    N_{K/ \Q}\br{ X - \uplambda_0^t Y } = \pm 1,
\] for which they managed to prove that there exists an effectively computable number $\upkappa$ such that any solution $(x,y,n,t)$ to the above equation with $\max\br{\abs{x}, \abs{y}} \geq 2$ satisfies $\max\br{\abs{x}, \abs{y}, \abs{n}, \abs{t}} < \upkappa$, i.e. there are but finitely many solutions and we, at least in theory, could find them all.

One conjecture Levesque and Waldschmidt posed in their paper is whether a \textit{twist} by multiple exponential parameters, i.e. the norm-form equation
\[
    N_{K/\Q}\br{ X - \uplambda_0^s \uplambda_1^t Y } = \pm 1
\] can be tackled analogously, working with the existing tools used to study Thue equations---a conjecture we want to give a partial positive answer for in this paper. Our theorem is as follows:

\begin{theorem}\label{thm: main}
    Let $\uplambda_0 = \uplambda_0(n), \uplambda_1 = \uplambda_1(n), \uplambda_2 = \uplambda_2(n)$ be the roots of the integer polynomial
    \[
        f(X) = f(X;\, n) = X^3-(n-1)X^2-(n+2)X-1
    \]
    and
    \[
        F(X,Y) = F(X,Y;\, n,s,t) = \br{ X - \uplambda_0^s\uplambda_1^t Y }\br{ X - \uplambda_1^s\uplambda_2^t Y }\br{ X - \uplambda_2^s\uplambda_0^t Y }.
    \]

    Let $(x,y;n,s,t) \in \Z^5$ be a solution to the Thue-Equation
    \begin{equation}\label{eq: main-thue}
        \abs{ F(X,Y) } = 1,
    \end{equation}
    with $\abs{y} \geq 2, n \geq 3$, $st \neq 0$ and let us assume that for a given $\upvarepsilon > 0$ we have
    \begin{equation}\label{eq: main-cond}
        \min\br{ \abs{2s-t}, \abs{2t-s}, \abs{s+t} } > \upvarepsilon \cdot \max\br{\abs{s}, \abs{t}} > 2.
    \end{equation} 
    Then there exists an effectively computable constant $\upkappa > 0$, depending only on $\upvarepsilon$, such that
    \[
        \max\br{ \abs{x}, \abs{y}, n, \abs{s}, \abs{t} } < \upkappa.
    \]
\end{theorem}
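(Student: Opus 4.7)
The plan is to apply the now-standard machinery of Baker's theory of linear forms in logarithms, as used for Thomas-like families and for Levesque--Waldschmidt-style single-twist equations, and adapt it to the two-parameter twist. Writing $\alpha_i := \uplambda_i^s\uplambda_{i+1}^t$ (indices modulo $3$), the three factors $\beta^{(i)} := x - \alpha_i y$ are Galois conjugates of each other and satisfy $\beta^{(0)}\beta^{(1)}\beta^{(2)} = \pm 1$, so each is a unit of $\mathcal{O}_K$. The strategy is then classical: identify which single $\beta^{(i)}$ is very small, write Siegel's identity among the three, and extract a linear form in logarithms whose lower bound (from Matveev) must compete with the strong upper bound coming from the equation itself.

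First I would record refined asymptotics of the roots, namely $\uplambda_0 = n + O(1)$, $\uplambda_1 = -1 + O(1/n)$ and $\uplambda_2 = -1/n + O(1/n^2)$, and deduce that $\log\abs{\uplambda_0} \approx \log n$, $\log\abs{\uplambda_1} \approx 0$ and $\log\abs{\uplambda_2} \approx -\log n$, up to $O(1/n)$. A direct computation then gives
\[
\log\abs{\alpha_0/\alpha_1} \approx (s+t)\log n, \qquad \log\abs{\alpha_1/\alpha_2} \approx (s-2t)\log n, \qquad \log\abs{\alpha_2/\alpha_0} \approx (t-2s)\log n,
\]
so that hypothesis~\eqref{eq: main-cond} is exactly what guarantees that the three moduli $\abs{\alpha_i}$ are pairwise separated by factors of at least $n^{\upvarepsilon\max(\abs{s},\abs{t})}$. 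Provided $\abs{y}\geq 2$, this separation forces exactly one $\beta^{(i)}$---say $\beta^{(0)}$ after a cyclic relabelling of the roots---to be very small, while the two remaining $\beta^{(j)}$ satisfy $\abs{\beta^{(j)}} \asymp \abs{\alpha_j-\alpha_0}\cdot\abs{y}$ and are comparable to each other.

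Next, Siegel's identity $(\alpha_1-\alpha_2)\beta^{(0)} + (\alpha_2-\alpha_0)\beta^{(1)} + (\alpha_0-\alpha_1)\beta^{(2)} = 0$ rearranges to
\[
\Lambda := \frac{(\alpha_0-\alpha_1)\beta^{(2)}}{(\alpha_0-\alpha_2)\beta^{(1)}} - 1 = -\frac{(\alpha_1-\alpha_2)\beta^{(0)}}{(\alpha_0-\alpha_2)\beta^{(1)}}.
\]
The right-hand side is exponentially small---of order roughly $n^{-c\max(\abs{s},\abs{t})}$ in the generic regime---yielding a strong upper bound on $\log\abs{\Lambda}$. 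Using that $\{\uplambda_0,\uplambda_1\}$ generates a subgroup of finite index in $\mathcal{O}_K^\times$, so that each $\beta^{(j)}$ is, up to sign and a bounded factor, of the form $\uplambda_0^{a_j}\uplambda_1^{b_j}$, substitution turns $\log\abs{\Lambda}$ into a linear form in the logarithms of $\uplambda_0$, $\uplambda_1$, $\alpha_0-\alpha_1$ and $\alpha_0-\alpha_2$, with integer coefficients controlled by $s,t,a_j,b_j$. Matveev's theorem then produces a lower bound for $\log\abs{\Lambda}$ of the standard shape $-C\cdot h(\uplambda_0)h(\uplambda_1)h(\alpha_0-\alpha_1)h(\alpha_0-\alpha_2)\log B$, and setting the two bounds against one another forces all parameters to be bounded by an effective constant depending only on $\upvarepsilon$.

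The main obstacle I anticipate is bookkeeping: condition~\eqref{eq: main-cond} only excludes three directions in $(s,t)$-space, so numerous sign patterns remain, and one has to case-split according to which $\abs{\alpha_i}$ is largest and which $\beta^{(j)}$ is smallest in each regime. A secondary, more quantitative difficulty is controlling the heights $h(\alpha_0-\alpha_j)$ and the coefficient bound $B$ polynomially in $\max(\abs{s},\abs{t},n,\log\abs{y})$: since $h(\alpha_0-\alpha_j)$ is dominated by the largest $h(\alpha_i) \lesssim \max(\abs{s},\abs{t})\log n$, the asymptotic separation guaranteed by \eqref{eq: main-cond} is crucial to keep the Matveev estimate tight enough to be beaten by the Siegel upper bound. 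Without this separation the Baker-type estimate would degenerate, which explains why the hypothesis is imposed.
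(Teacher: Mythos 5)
Your setup (root asymptotics, separation of the $\abs{\upalpha_i}$ from Condition~\eqref{eq: main-cond}, one small $\upbeta$, Siegel's identity, unit decomposition, Baker/Matveev) matches the paper's strategy, but the endgame contains a genuine gap: a \emph{single} application of a lower bound for linear forms in logarithms cannot ``force all parameters to be bounded''. In your linear form the heights $\h(\upalpha_0-\upalpha_1)$, $\h(\upalpha_0-\upalpha_2)$ are of size roughly $\uptau\log n$ (with $\uptau=\max(\abs{s},\abs{t})$), while the Siegel upper bound is only of size $-c\,\uptau\log n - 2\log\abs{y}$. Comparing the two therefore yields an inequality of the shape $\log\abs{y} \ll \uptau(\log n)^3(\log\uptau+\log\log n)$, which is perfectly consistent for arbitrarily large $n$ and $\uptau$: you get a bound on $\log\abs{y}$ in terms of $n,\uptau$ (this is the paper's Lemma~\ref{lem: logy-ub}), but no bound on $n$ or $\uptau$ whatsoever. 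The Matveev estimate does not ``degenerate without the separation''; even with it, it simply never beats the upper bound in the way you claim.

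What is missing are the two further stages that the paper needs. First, one must pass from $\log\abs{(\upalpha_j-\upalpha_k)/(\upalpha_j-\upalpha_l)}$ to $\log\abs{\upalpha_u/\upalpha_v}$, shifting the $s,t$-dependence out of the heights and into the integer coefficients of $\log\uplambda_0,\log\abs{\uplambda_2}$; this creates a new linear form $\Uplambda'$ whose non-vanishing is no longer automatic and requires the case analysis of Lemma~\ref{lem: non-zeroLF} (your $\Uplambda$ is nonzero by construction, but that form has the oversized height). Only then does Baker's bound give $\uptau \ll \log n\log\log n$ (Lemma~\ref{lem: tau-bound}). Second, even after that, $n$ is still unbounded by any Baker-type inequality, since every quantity scales with $\log n$; the paper bounds $n$ by a purely asymptotic argument (Lemma~\ref{lem: contradiction}): because $\log\abs{\uplambda_2}=\uptheta(1/n)$ is negligible against the now-logarithmic coefficient bounds, the coefficients of $\log\uplambda_0$ and $\log\abs{\uplambda_2}$ in $\Uplambda'$ must vanish exactly, which pins $a,b$ to explicit linear combinations of $s,t$ and leads in every case to $\log\abs{y}<\log 2$, contradicting $\abs{y}\geq 2$ unless $n<\upkappa$. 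Without these two steps (and the non-vanishing lemma in particular), your proposal does not reach the conclusion of the theorem.
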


\section{Outline of the Proof}\label{sec: outline}

We strive to improve the readability of our paper by separating the conceptual proof from the necessary detail work. Thus we first give an outline of the proof of Theorem~\ref{thm: main}, which follows the general strategy to solve Thue equations via Baker bounds on linear forms in logarithms. At the intermediate steps that require careful technical consideration, we refer to the corresponding lemmas in the subsequent section, where we also collect other auxiliary results that we use in the process.

Let us start by briefly mentioning the degree of restriction we placed on the theoretical solution $(x,y;\, n,s,t)$ of Equation~\eqref{eq: main-thue}. There are infinitely many solutions where $\abs{y} \leq 2$, e.g. $(1,0;\, n,s,t)$, hence the condition $\abs{y}\geq 2$ is necessary. Due to the symmetry $F_{-n-1}(X, Y) = F_n(-Y,-X)$, we only have to consider non-negative $n$, while we drop the $n$ smaller than $3$ for $\log(n)$ and $\log(\log(n))$ to be positive (and indeed defined at all over the real numbers). But we could alternatively take $n \geq 3$ to be the absolute value of the integer parameter and conduct the same proof, where we just have to be mindful of taking $\uplambda_0$ in absolute value as well. Finally, Condition~\eqref{eq: main-cond} is the degree to which we could solve Levesque and Waldschmidt's conjecture and can be read thus: No quotient of two conjugates of $\uplambda_0^s\uplambda_1^t$ is allowed to be too close to $1$, for the whole argument falls apart otherwise.

We want to improve the readability of the terms and equations and thus introduce several short hands. First, we want to hide constants in our expressions that do not affect their asymptotic without immediately resorting to $O$-notations. So when we write $z \ll_{\, \updelta} z'$, we mean that there exists an effectively computable constant $c(\updelta)$ which does not depend on $z$ or $z'$, but which may depend on $\updelta$ such that $z \leq c(\updelta) \, z'$. If both $z \ll z'$ and $z' \ll z$, we write $z = \uptheta\br{z'}$.

Let furthermore $\uptau := \max\br{\abs{s}, \abs{t}}$ and
\begin{alignat}{3}\label{eq: def-alphabeta}
    &\upalpha_0 := \uplambda_0^s\uplambda_1^t, &&\upalpha_1 := \uplambda_1^s\uplambda_2^t,  &&\upalpha_2 := \uplambda_2^s\uplambda_0^t, \nonumber \\
    &\upbeta_0 := x - \upalpha_0 y,\;\; &&\upbeta_1 := x - \upalpha_1 y,\;\;  &&\upbeta_2 := x - \upalpha_2 y.
\end{alignat}
In this notation, $\upalpha_i$, $\upbeta_i$ resp., is the image of $\upalpha_0$, $\upbeta_0$ resp., under the automorphism that maps $\uplambda_0$ to $\uplambda_i$. The numeration of the roots $\uplambda_0, \uplambda_1, \uplambda_2$ is chosen such that $\uplambda_0$ is close to $n$, $\uplambda_1$ is close to $0$ and $\uplambda_2$ is close to $-1$. The exact inequalities used are given here in Lemma~\ref{lem: root-approx}, from Lemma~10 of \cite{LPV99}

One particular $\upbeta_i$ stands out as being very small, or $\frac{x}{y}$ is a good rational approximation to one particular $\upalpha_i$ respectively. We denote this as $\upbeta_j$, i.e. $\abs{\upbeta_j} = \min\br{\abs{\upbeta_0}, \abs{\upbeta_1}, \abs{\upbeta_2}}$ and the remaining two, in no particular order, as $\upbeta_k$ and $\upbeta_l$.

By the triangle inequality, we have that
\begin{equation}\label{eq: betakl-lb}
    2\abs{\upbeta_k} \geq \abs{\upbeta_k - \upbeta_j} = \abs{ y \br{ \upalpha_j - \upalpha_k } },
\end{equation}
same for $\upbeta_l$. We have to be mindful of the difference between two of the algebraic numbers, e.g. $\upalpha_j, \upalpha_k$, not becoming too small, which we do in Lemma~\ref{lem: alphadiff}. Having done that, we can use the above inequality in combination with $\abs{\upbeta_j \upbeta_k \upbeta_l} = \abs{ F(x,y) } = 1$ to derive for an effectively computable constant $c_1 > 0$ the upper bound
\[
    \abs{\upbeta_j} \leq \frac{1}{\abs{\upbeta_k\upbeta_l}} \ll \frac{1}{ \abs{y}^2 \abs{\upalpha_j - \upalpha_k} \abs{\upalpha_j - \upalpha_l} } \overset{\text{Lemma~\ref{lem: alphadiff}}}{\ll} \frac{1}{\abs{y}^2 \max\br{\abs{\upalpha_j}, \abs{\upalpha_k}} \max\br{ \abs{\upalpha_j}, \abs{\upalpha_l} } }.
\]

Using Lemma~\ref{lem: prodbymax}, the simple property that the product of pairwise maxima of positive real numbers multiplying to $1$ is bounded from below by an appropriate root of the global maximum, we get
\[
    \max\br{\abs{\upalpha_j}, \abs{\upalpha_k}} \max\br{ \abs{\upalpha_j}, \abs{\upalpha_l} } \geq \sqrt{\max\br{\abs{\upalpha_0}, \abs{\upalpha_1}, \abs{\upalpha_2}}}.
\]

We ascertain that at least the largest of the algebraic number $\upalpha_i$ behaves as expected, and indeed, by Lemma~\ref{lem: alphamax}, the above square root is at least $n^{\frac{c_1}{2} \uptau}$. We thus get that our bound for $\abs{\upbeta_j}$ is indeed small, namely 
\begin{equation}\label{eq: betaj-ub}
    \abs{\upbeta_j} \ll \frac{1}{\abs{y}^2 \max\br{\abs{\upalpha_j}, \abs{\upalpha_k}} \max\br{ \abs{\upalpha_j}, \abs{\upalpha_l} } } \ll \frac{1}{\abs{y}^2 n^{\frac{c_1}{2} \, \uptau} }.
\end{equation}

We then use this information to rewrite Siegel's Identity into a unit equation where we now know one addend to be very small. Stating the identity
\begin{equation}\label{eq: Siegel}
    \upbeta_j \br{ \upalpha_k - \upalpha_l } + \upalpha_l \br{\upalpha_j- \upalpha_k} + \upbeta_k \br{\upalpha_l - \upalpha_j} = 0
\end{equation}
and dividing by the last addend with flipped sign gives
\begin{equation}\label{eq: Siegel-alt}
    \frac{\upbeta_j}{\upbeta_k} \frac{\upalpha_k - \upalpha_l}{\upalpha_j - \upalpha_l} + \frac{\upbeta_l}{\upbeta_k} \frac{\upalpha_j - \upalpha_k}{\upalpha_j - \upalpha_l} = 1.
\end{equation}

The first addend on the left-hand side is very small: We use the inequality from Equation~\eqref{eq: betakl-lb} again, in combination with the bound from Inequality~\eqref{eq: betaj-ub} derived above to get
\begin{align*}
    \abs{ \frac{\upbeta_j}{\upbeta_k} \frac{\upalpha_k - \upalpha_l}{\upalpha_j - \upalpha_l} } \ll \frac{ \abs{\upalpha_k-\upalpha_l} }{ \abs{y}^3 n^{ \frac{c_1}{2} \uptau} \abs{\upalpha_j - \upalpha_k} \abs{\upalpha_j - \upalpha_l} } \ll \frac{ \max\br{\abs{\upalpha_k}, \abs{\upalpha_l}} }{ \abs{y}^3 n^{ \frac{c_1}{2} \uptau} \max\br{\abs{\upalpha_j}, \abs{\upalpha_k}} \max\br{\abs{\upalpha_j}, \abs{\upalpha_l}} } .
\end{align*}

We differentiate the cases for whether $\upalpha_j$ has the maximal absolute value or not in Lemma~\ref{lem: alphasalad} and resolve the quotients of the various maxima with an upper bound of $2\abs{y}$. We thus have
\[
    \abs{ \frac{\upbeta_j}{\upbeta_k} \frac{\upalpha_k - \upalpha_l}{\upalpha_j - \upalpha_l} } \ll \frac{1}{\abs{y}^2 n^{\frac{c_1}{2} \uptau} },
\]
and unless both $n$ and $\uptau$ are small, we can assume this to be smaller than $\frac{1}{2}$.

Note that neither of the two addends in Equation~\eqref{eq: Siegel-alt} is zero: None of the $\upbeta_i$ is since each $\upalpha_i$ is irrational, and any two $\upalpha_p, \upalpha_q$ are distinct. The latter fact of which can be seen by looking at Equation~\eqref{eq: logalphaquotient} from the proof of Lemma~\ref{lem: alphadiff}, together with the fact that $\log\uplambda_0,\log\abs{\uplambda_2}$ are $\Q$-linearly independent, since $\uplambda_0, \uplambda_2$ give a system of fundamental units for $\Z[\uplambda_0]$ by Proposition~\ref{prop: Tho}, and $st\neq 0$.

This gives for Equation~\eqref{eq: Siegel-alt} that
\[
    \frac{\upbeta_l}{\upbeta_k} \frac{\upalpha_j - \upalpha_k}{\upalpha_j - \upalpha_l} = 1 - \frac{\upbeta_j}{\upbeta_k} \frac{\upalpha_k - \upalpha_l}{\upalpha_j - \upalpha_l} \neq 1,
\]
and taking the logarithm, using the relation $\abs{\log\br{1-v}} \leq 2\abs{v}$ for $\abs{v}\leq \frac{1}{2}$, yields
\begin{equation}\label{eq: LF}
    0 < \abs{ \underbrace{ \log\abs{ \frac{\upbeta_l}{\upbeta_k} } + \log\abs{\frac{\upalpha_j - \upalpha_k}{\upalpha_j - \upalpha_l}} }_{=: \Uplambda} } \leq 2 \abs{ \frac{\upbeta_j}{\upbeta_k} \frac{\upalpha_k - \upalpha_l}{\upalpha_j - \upalpha_l} } \ll \frac{1}{\abs{y}^2 n^{\frac{c_1}{2} \uptau} }.
\end{equation}

With some extra effort exerted in Lemma~\ref{lem: logy-ub}, this allows us to directly derive an upper bound for $\log\abs{y}$, without troubling, say, Bugeaud and Gy\H{o}ry's result \cite{BuGy96}, which holds much more generally. We get that
\[
    \log\abs{y} \ll \, \uptau \, \br{\log n}^3 \br{ \log\uptau + \log\log n }.
\]

Returning to the linear form in logarithms $\Uplambda$, we managed to derive the bound for $\log\abs{y}$ by shifting the dependence on the exponents $s,t$ from the logarithm of $\abs{\frac{\upbeta_l}{\upbeta_k}}$ into its coefficient. If we manage to do the same for the second logarithm $\log\abs{\frac{\upalpha_j - \upalpha_k}{\upalpha_j - \upalpha_l}}$ as well, then the tools we used before will give us, also using the bound for $\log\abs{y}$, an absolute bound for the parameters $n$ and $\uptau$, thus proving the theorem.

To that end, we extract the larger of $\upalpha_j$ and $\upalpha_k$, which we call $\upalpha_u$, from the denominator, and $\upalpha_v$, defined the same, from the numerator. We can then separate the factor $\log\abs{\frac{\upalpha_u}{\upalpha_v}}$, and the remaining logarithm is at most $4n^{-c_1\uptau}$ by Lemma~\ref{lem: alphadiff}. We shift this into the upper bound and get a new small linear form in logarithms $\Uplambda'$, where
\begin{equation}\label{eq: lambdaprime}
    \abs{\Uplambda'} = \abs{ \log\abs{ \frac{\upbeta_l}{\upbeta_k} } + \log\abs{ \frac{\upalpha_u}{\upalpha_v} } } \ll \frac{1}{n^{\frac{c_1}{2} \uptau}}. 
\end{equation}

Note that we can no longer argue that $\Uplambda'$ is non-zero by construction. We must consider that it is, which we do at length in Lemma~\ref{lem: non-zeroLF}.

We end up either way with a (possibly different) linear form $\Uplambda''$ in logarithms of $\upbeta_k, \upbeta_l, \upalpha_j,\upalpha_k, \upalpha_l$ and $2$, for which inequalities of the form
\[
    0 < \abs{ \Uplambda'' } < \frac{1}{n^{ \frac{c_1}{2} \uptau}}
\]
hold again. We can now again explicitly shift the dependency on $s,t$ into the coefficients and write $\Uplambda'' = A \log\abs{\uplambda_0} + B \log\abs{\uplambda_2} - C\log 2$, where $A,B$ are linear-combinations of $s,t$ and $C$ is $0$ or $1$. We can use the bounds for the coefficients, heights and $\log\abs{y}$ we already established in Lemmas~\ref{lem: coeff-ub}-\ref{lem: logy-ub} to then use bounds for linear forms in logarithms again and derive in Lemma~\ref{lem: tau-bound} that
\[
    \uptau < c_3 \log n \log\log n.
\]

With this information, we can say that in all our decompositions of $\upalpha_i$ and $\upbeta_i$ into powers of $\uplambda_0, \uplambda_2$, the second power is of no consequence unless $n$ is already sufficiently "small", i.e. $n < \upkappa$, since $\log\abs{\uplambda_2} = \uptheta\br{\frac{1}{n}}$ goes to zero, even pitted against the logarithmic bound for the exponents $s,t$.

Carrying out a rigorous case-differentiation for the type $j$ and the order of the $\abs{\upalpha_i}$ in Lemma~\ref{lem: logy-ub} gives a contradiction to $\abs{y} \geq 2$ in each case under the assumption that $n \geq \upkappa$ is sufficiently large for the powers of $\uplambda_2$ to be of no consequence.

This gives $n < \upkappa$, which gives a bound first for $\uptau$ by Lemma~\ref{lem: tau-bound} and then $\log\abs{y}$ by Lemma~\ref{lem: logy-ub} and concludes the proof of Theorem~\ref{thm: main}.

\par\bigskip

\section{Auxilary Results}\label{sec: aux}

We first define the absolute Weil height and Mahler's measure for the sake of completeness---see, e.g., \cite{Bomb82} or \cite{Smyth08}. If $K$ is a number field of degree $d = [K:\Q]$, and for every place $\upnu$, write $d_\upnu = [K_\upnu : \Q_\upnu]$ for the completions $K_\upnu, \Q_\upnu$ with respect to $\upnu$, we normalise the absolute value $\abs{\cdot}_\upnu$ so that
\begin{enumerate}
    \item if $\upnu \vert p$ for a prime number $p$, then $\abs{p}_\upnu = p^{-d_\upnu / d}$,
    \item if $\upnu \vert \infty$ and $\upnu$ is real, then $\abs{x}_\upnu = \abs{x}^{1/d}$,
    \item if $\upnu \vert \infty$ and $\upnu$ is complex, then $\abs{x}_\upnu = \abs{x}^{2/d}$,
\end{enumerate}
where $\abs{x}$ denotes the Euclidian absolute value in $\R$ or $\C$. Given this normalisation, the product formula
\[
    \prod_{ \upnu } \abs{\upalpha}_\upnu = 1
\]
holds for every $\upalpha \in K^*$. The absolute height of $\upalpha \in K$ is then defined as
\[
    \bigh(\upalpha) = \prod_\upnu \max\br{ 1, \abs{\upalpha}_\upnu},
\]
and the absolute logarithmic height as $\h(\upalpha) = \log \bigh(\upalpha)$. The absolute height is then equal to the Mahler measure $\M(m_\upalpha)$ of its minimal polynomial $m_\upalpha$, i.e. if $m_\upalpha(X) = a_d\prod_{i=1}^d \br{X - \upalpha_i} \in \Z[X]$ is the minimal polynomial of $\upalpha \in K$, then
\[
    \h(\upalpha) = \frac{1}{d} \log \M(m_\upalpha) = \frac{1}{d} \br{ \log\abs{a_d} + \sum_{i=1}^d \log\max\br{1, \abs{\upalpha_i}} }.
\]

\begin{proposition}[Baker, Wüstholz; \cite{BaWh93}]\label{prop: BaWh}
    Let $\upgamma_1, \dots, \upgamma_t$ be algebraic numbers not $0$ or $1$ in $K = \Q\br{\upgamma_1,\dots, \upgamma_t}$, which is of degree $D$. Let $b_1, \dots, b_t \in \Z$ and
    \[
        \Uplambda = b_1 \log\upgamma_1 + \cdots + b_t \log\upgamma_t \neq 0.
    \]

    Then 
    \[
        \log\abs{ \Uplambda } \geq - C \cdot h_1\cdots h_t \cdot \log B,
    \]
    where $C = 18(t+1)!t^{t+1}(32D)^{t+2}\log(2tD)$, $B \geq \max\br{3, \abs{b_1}, \cdots, \abs{b_t} }$ and
    \[
        h_i \geq \max\br{ \h(\upgamma_i), \log\abs{\upgamma_i}\, D^{-1}, 0.16\, D^{-1}  }
    \]
    for $i \in \set{1, \dots, t}$.    
\end{proposition}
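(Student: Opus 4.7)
The plan is to follow the transcendence-method strategy behind all effective bounds on linear forms in logarithms: argue by contradiction, build an auxiliary function that vanishes too often, and close with a zero estimate. First I would assume towards contradiction that $\abs{\Uplambda} < \exp\br{-C\, h_1\cdots h_t\,\log B}$ for the stated $C$. Then, via Siegel's lemma applied over $K$, I would construct a non-zero polynomial $P(z_1,\dots,z_t) \in \Z[z_1,\dots,z_t]$ of multi-degree $(L_1,\dots,L_t)$ with small integer coefficients such that the entire function
\[
    \Phi(z) := P\br{\upgamma_1^{z}, \dots, \upgamma_t^{z}}
\]
vanishes to order at least $T$ at each of $z = 0,1,\dots,S-1$. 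The number of coefficients of $P$ is $\prod(L_i+1)$, the number of vanishing constraints is $ST$, and each constraint is a linear equation whose coefficient size is controlled by $h_i$ and $D$; Siegel's lemma then delivers such a $P$ provided the parameters are chosen with enough slack.

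The second step is the extrapolation. Because $\Uplambda = b_1\log\upgamma_1 + \cdots + b_t\log\upgamma_t$ is by assumption extremely small, a suitable change of variables makes $\Phi(z)$ well approximated by a polynomial in a single exponential, and its analytic size on a disk of radius slightly larger than $S$ admits a sharp Schwarz-type upper bound. On the other hand, an algebraic lower bound of Liouville type forbids non-zero algebraic numbers of controlled height from being too small. Comparing the two forces $\Phi$ to vanish on a strictly larger grid. Iterating this alternation of analytic estimate and arithmetic estimate produces vanishing of $\Phi$ at so many points, with such high multiplicity, that a multiplicity estimate on the algebraic group $\mathbb{G}_m^t$ (of Philippon-W\"ustholz type) forces $P$ itself to vanish, contradicting the construction.

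The hard part is producing precisely the constant $C = 18(t+1)!\,t^{t+1}(32D)^{t+2}\log(2tD)$. The factorial $(t+1)!$ is the combinatorial signature of the zero estimate on a product of $t+1$ copies of $\mathbb{G}_m$, with the extra factor accommodating the hyperplane cut out by $\Uplambda$; the factor $(32D)^{t+2}$ tracks the loss in passing from heights over $K$ to the absolute Weil height and back, including the Kummer-type descent that handles possible roots of unity; and $\log(2tD)$ emerges from a final logarithmic optimisation of the auxiliary parameter $L$. Carefully balancing $L_1,\dots,L_t, S, T$ against all three is what the Baker-W\"ustholz paper accomplishes in full; for our purposes we need only invoke the resulting statement.
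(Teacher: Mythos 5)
This proposition is not proved in the paper at all: it is quoted verbatim from Baker and W\"ustholz \cite{BaWh93}, and your proposal, after sketching the standard auxiliary-function, Siegel-lemma, extrapolation and zero-estimate strategy, likewise ends by invoking that paper for the explicit constant $C$, so you are taking essentially the same (and the only sensible) route of treating the result as a cited black box. The sketch itself is only heuristic---in particular the provenance you assign to the individual factors of $C$ and the precise shape of the auxiliary function are guesswork rather than verifiable steps---but nothing in it is load-bearing for the paper, which uses the statement exactly as you finally do, by citation.
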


After stating the notion of height we use and the fundamental tool--lower bounds for linear form in logarithms--to solve Thue equations, we now start with a few observations that concern our Thue Equations~\eqref{eq: main-thue}.

\begin{lemma}\label{lem: root-approx}
    The roots $\uplambda_0, \uplambda_1, \uplambda_2$ of the polynomial $f(X) = X^3-(n-1)X^2-(n+2)X-1$ satisfy the following inequalities:
    \begin{alignat*}{2}
        n + \frac{1}{n} &< \uplambda_0 &&< n + \frac{2}{n} \\
        - \frac{1}{n+1} &< \uplambda_1 = -\frac{1}{\uplambda_0 + 1} &&< -\frac{1}{n+2} \\
        -1-\frac{1}{n} &< \uplambda_2 = -\frac{\uplambda_0 + 1}{\uplambda_0} &&< -1-\frac{1}{n+1}.
    \end{alignat*}
\end{lemma}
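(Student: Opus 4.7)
The plan is to establish the bounds on $\uplambda_0$ directly by a sign-change argument, then derive the bounds on $\uplambda_1$ and $\uplambda_2$ via an algebraic symmetry that cyclically permutes the roots of $f$.

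First I would expand
\[
    f(n + \upvarepsilon) = \upvarepsilon\br{n^2 + n - 2} + \upvarepsilon^2\br{2n+1} + \upvarepsilon^3 - 2n - 1
\]
and substitute the two candidate endpoints. A short calculation gives $f\br{n+1/n} = -n + 1/n^2 + 1/n^3$, which is negative for $n \geq 3$, and $f\br{n+2/n} = 1 + 4/n + 4/n^2 + 8/n^3 > 0$. The intermediate value theorem then produces a real root $\uplambda_0 \in \br{n + 1/n,\, n+2/n}$, which in particular lies in $(n, n+1)$.

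For the other two roots I would verify the functional equation
\[
    X^3 \, f\br{ -1 - \tfrac{1}{X} } = f(X)
\]
by direct expansion. This shows that the M\"obius map $\upvarphi: X \mapsto -1 - 1/X$ permutes the roots of $f$; a quick computation gives $\upvarphi^2(X) = -1/(X+1)$ and $\upvarphi^3 = \mathrm{id}$, so $\upvarphi$ has order three as a function. Since $f$ is irreducible over $\Q$ it has no fixed point of $\upvarphi$, so $\upvarphi$ acts as a $3$-cycle on $\set{\uplambda_0, \uplambda_1, \uplambda_2}$, and I may define
\[
    \uplambda_2 := \upvarphi(\uplambda_0) = -\frac{\uplambda_0 + 1}{\uplambda_0}, \qquad \uplambda_1 := \upvarphi^2(\uplambda_0) = -\frac{1}{\uplambda_0 + 1},
\]
in agreement with the formulas in the statement.

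It remains to push the bounds on $\uplambda_0$ through these two formulas by monotonicity. From $n < \uplambda_0 < n+1$ we obtain $1/(n+1) < 1/\uplambda_0 < 1/n$, which translates into $-1 - 1/n < \uplambda_2 < -1 - 1/(n+1)$; and from $n+1 < \uplambda_0 + 1 < n+2$ we get $1/(n+2) < 1/(\uplambda_0+1) < 1/(n+1)$, yielding $-1/(n+1) < \uplambda_1 < -1/(n+2)$. The only genuinely arithmetic step is the opening sign-change calculation; the role of the functional equation is to spare us from repeating that argument at four additional endpoints, which is where I would expect the main bookkeeping trouble if one attempted a brute-force approach.
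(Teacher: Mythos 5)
Your proof is correct, and it is worth noting that the paper does not actually prove this lemma at all: it simply cites Lemma~10 of \cite{LPV99}. Your argument is therefore a self-contained replacement rather than a parallel of the paper's proof. I checked the computations: the expansion $f(n+\upvarepsilon)=\upvarepsilon(n^2+n-2)+\upvarepsilon^2(2n+1)+\upvarepsilon^3-2n-1$ is right, the endpoint values $f(n+1/n)=-n+n^{-2}+n^{-3}<0$ and $f(n+2/n)=1+4/n+4/n^2+8/n^3>0$ are right (the first needs $n\geq 2$, consistent with the standing hypothesis $n\geq 3$), and the functional equation $X^3 f(-1-1/X)=f(X)$ does hold, so $\upvarphi(X)=-1-1/X$ permutes the roots (all nonzero, since $f(0)=-1$), with $\upvarphi^3=\mathrm{id}$; the monotonicity transfer of $n<\uplambda_0<n+1$ through $\upvarphi$ and $\upvarphi^2$ gives exactly the stated bounds for $\uplambda_2$ and $\uplambda_1$. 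Two small refinements: first, you invoke irreducibility of $f$ (which you do not prove) to exclude fixed points of $\upvarphi$, but this is unnecessary --- the fixed points of $\upvarphi$ satisfy $X^2+X+1=0$ and are non-real, whereas $\uplambda_0>n>0$, so $\upvarphi(\uplambda_0)\neq\uplambda_0$ directly, and since the induced permutation of the three roots has order dividing $3$ and is not the identity, it is a $3$-cycle; second, it would be worth stating explicitly that this argument also shows all three roots are real (the other two being rational functions of the real root $\uplambda_0$), which the lemma tacitly asserts. Compared with outsourcing to \cite{LPV99}, your route costs one sign-change computation and one polynomial identity, and in exchange makes the paper self-contained at this point.
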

\begin{proof}
    See, for instance, Lemma 10 in \cite{LPV99}
\end{proof}

\begin{cor}\label{cor: root-logs}
    The logarithms of the roots of the polynomial $f(X) = X^3-(n-1)X^2-(n+2)X-1$ satisfy the following inequalities:
    \begin{alignat*}{2}
        \log n &< \log\uplambda_0 &&< \log n + \frac{1}{n^2} \\
        -\log n - \frac{2}{n} &< \log\abs{\uplambda_1} &&< -\log n - \frac{1}{2n} \\
        \frac{1}{n} - \frac{2}{n^2} &< \log\abs{\uplambda_2} &&< \frac{1}{n} + \frac{1}{n^2}.
    \end{alignat*}
\end{cor}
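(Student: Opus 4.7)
The plan is to derive each of the three two-sided bounds by taking the logarithm of the corresponding enclosure in Lemma~\ref{lem: root-approx} and applying the elementary Taylor estimate
\[
    x - \tfrac{1}{2}x^2 < \log(1+x) < x \qquad (0 < x < 1).
\]

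For $\log\uplambda_0$, I would factor $\uplambda_0 = n(1 + \delta)$ with $\delta = \uplambda_0/n - 1 \in (1/n^2,\, 2/n^2)$ and split $\log\uplambda_0 = \log n + \log(1+\delta)$; the lower bound is immediate from $\delta > 0$, and the upper bound follows from $\log(1+\delta) < \delta$ combined with the size of $\delta$. For $\log\abs{\uplambda_1}$, I would use the enclosure $\abs{\uplambda_1} \in (1/(n+2),\, 1/(n+1))$ to rewrite $\log\abs{\uplambda_1} = -\log n - \log(1 + c/n)$ for some $c \in (1,2)$. The lower bound then reduces to $\log(1 + 2/n) < 2/n$, while the upper bound reduces to $\log(1 + 1/n) > 1/(2n)$, which is a direct consequence of the Taylor estimate above valid for $n \geq 3$.

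For $\log\abs{\uplambda_2}$, I would write $\abs{\uplambda_2} = 1 + \delta$ with $\delta \in (1/(n+1),\, 1/n)$ and apply the Taylor estimate to $\log(1+\delta)$ directly. The upper bound $\log\abs{\uplambda_2} < 1/n$ is immediate from $\log(1+\delta) < \delta$; for the lower bound I would combine $1/(n+1) > 1/n - 1/n^2$ with the quadratic remainder $-\delta^2/2 > -1/(2n^2)$ to reach $\log\abs{\uplambda_2} > 1/n - 3/(2n^2)$, which is comfortably larger than the stated $1/n - 2/n^2$.

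I do not anticipate any real obstacle here; the corollary is a routine Taylor analysis of $\log(1+x)$ around the origin. The only care required is in tracking the quadratic remainder so that the stated constants come out as asserted, and in each of the three instances the standing hypothesis $n \geq 3$ (inherited from the setup of Theorem~\ref{thm: main}) is precisely what makes the linear term dominate the quadratic correction.
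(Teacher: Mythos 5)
Your route---taking logarithms of the enclosures in Lemma~\ref{lem: root-approx} and controlling $\log(1+x)$ by the elementary estimate $x-\tfrac{1}{2}x^2<\log(1+x)<x$---is exactly the derivation the corollary is meant to rest on, and your treatment of the second and third lines is correct: the reductions $\log\br{1+\tfrac{2}{n}}<\tfrac{2}{n}$, $\log\br{1+\tfrac{1}{n}}>\tfrac{1}{2n}$, $\log\br{1+\tfrac{1}{n}}<\tfrac{1}{n}$ and $\log\br{1+\tfrac{1}{n+1}}>\tfrac{1}{n}-\tfrac{3}{2n^2}$ all hold for $n\geq 3$ and give the stated constants with room to spare.

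The gap is in the first line. From Lemma~\ref{lem: root-approx} you only know $\updelta:=\uplambda_0/n-1<\tfrac{2}{n^2}$, so $\log(1+\updelta)<\updelta$ yields $\log\uplambda_0<\log n+\tfrac{2}{n^2}$, not the asserted $\log n+\tfrac{1}{n^2}$; the phrase ``combined with the size of $\updelta$'' hides a factor of $2$ that cannot be recovered, since the quadratic correction is only of order $n^{-4}$. Moreover, no repair is possible from the actual root: since $\uplambda_1=-\tfrac{1}{\uplambda_0+1}$ and $\uplambda_2=-1-\tfrac{1}{\uplambda_0}$, the trace identity gives $\uplambda_0=n+\tfrac{1}{\uplambda_0}+\tfrac{1}{\uplambda_0+1}=n+\tfrac{2}{n}+O\br{n^{-2}}$, so $\log\uplambda_0-\log n$ is asymptotically $\tfrac{2}{n^2}$; numerically, for $n=10$ one finds $\uplambda_0\approx 10.1875$ and $\log\uplambda_0-\log 10\approx 0.0186>\tfrac{1}{n^2}=0.01$. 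So the upper bound $\log n+\tfrac{1}{n^2}$ as printed cannot be deduced from Lemma~\ref{lem: root-approx} and indeed fails for the true root; the correct constant is $\tfrac{2}{n^2}$, which is what your computation actually proves and which is all that is needed later, since the subsequent lemmas only use $\log\uplambda_0=\log n+O\br{n^{-2}}$. As a proof of the statement as written, however, your first step does not go through: you should either flag the constant as a typo or prove the bound with $\tfrac{2}{n^2}$ and propagate that change.
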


\begin{proposition}[Thomas; \cite{Tho79}]\label{prop: Tho}
    Let $\uplambda_0, \uplambda_1, \uplambda_2$ be the roots of the polynomial $f(X) = X^3 - (n-1)X^2 - (n+2)X -1$ and $K$ be the number field generated by them. Then $\uplambda_0, \uplambda_2$ give a set of fundamental units for the order $\Z[\uplambda_0]$.
\end{proposition}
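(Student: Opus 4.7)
The strategy I would follow is the standard one for parametric cubic unit families: compute the regulator of the candidate pair $\{\uplambda_0,\uplambda_2\}$ explicitly from Corollary~\ref{cor: root-logs} and show that it is too small to leave room for a nontrivial index inside the fundamental system.

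First, by Vieta $\uplambda_0\uplambda_1\uplambda_2=-f(0)=1$, so each root is a unit of $\Z[\uplambda_0]$; note also that $\uplambda_2=-(1+\uplambda_0)/\uplambda_0$ does lie in $\Z[\uplambda_0]$ since $\uplambda_0$ is a unit there. Lemma~\ref{lem: root-approx} gives three distinct real roots, and a direct calculation yields $\mathrm{disc}(f)=(n^2+n+7)^2$, a square; hence $K=\Q(\uplambda_0)$ is a totally real cyclic cubic with Galois group $\langle\sigma\rangle$ cyclically permuting the $\uplambda_i$, and Dirichlet's unit theorem gives rank $2$ for the unit group of $\Z[\uplambda_0]$ modulo torsion.

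Second, feeding Corollary~\ref{cor: root-logs} into the $2\times 2$ minor
\[
    R(\uplambda_0,\uplambda_2) = \bigl|\log|\uplambda_0|\,\log|\sigma\uplambda_2| - \log|\sigma\uplambda_0|\,\log|\uplambda_2|\bigr|
\]
one finds $R(\uplambda_0,\uplambda_2)=(\log n)^2 + O((\log n)/n)$, which is strictly positive. Thus $\langle -1,\uplambda_0,\uplambda_2\rangle$ has finite index $q\geq 1$ in $\Z[\uplambda_0]^\times$, and $R(\uplambda_0,\uplambda_2) = q\cdot R_{\mathrm{fund}}$ for the fundamental regulator $R_{\mathrm{fund}}$. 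Applying a Cusick/Remak-type lower bound of the shape $R_{\mathrm{fund}} \gg \log^2 d_K$ with $d_K \mid (n^2+n+7)^2 \sim n^4$ then bounds $q$ by some absolute $C$. The remaining candidates $q\in\{2,\dots,C\}$ are eliminated as follows: a hypothetical $\eta\in\Z[\uplambda_0]$ with $\eta^q\in\langle-1,\uplambda_0,\uplambda_2\rangle$ would have its three conjugates pinned to explicit $q$-th roots of products of the $\uplambda_i$, and substituting Corollary~\ref{cor: root-logs} contradicts integrality of the trace $\eta+\sigma\eta+\sigma^2\eta$ once $n$ is large.

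The main obstacle will be sharpening the Cusick-type bound enough to rule out each small index $q$ uniformly in $n$, together with the direct verification for the finitely many small $n$ where the shortcut constants are insufficient and where one must also confirm $\Z[\uplambda_0]=\mathcal{O}_K$ (equivalently, that the conductor is trivial) so that the regulator argument compares the right groups.
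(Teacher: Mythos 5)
The paper does not prove this statement at all: it is quoted as an external result of Thomas \cite{Tho79} (the adjacent Lemma~\ref{lem: root-approx} is likewise only cited, from \cite{LPV99}), so there is no internal argument to compare against. Your outline is the standard route for this family (the ``simplest cubic'' fields): your discriminant computation $\mathrm{disc}(f)=(n^2+n+7)^2$ is correct, the field is indeed cyclic and totally real, and the asymptotic $R(\uplambda_0,\uplambda_2)=(\log n)^2+O\bigl((\log n)/n\bigr)$ is right, so the regulator-comparison strategy is the appropriate one and is essentially what the literature does.

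However, as written the argument has a genuine gap, concentrated in the last step. You propose to bound the index via a Cusick/Remak bound in terms of $d_K$ and then to ``confirm $\Z[\uplambda_0]=\mathcal{O}_K$ so that the regulator argument compares the right groups.'' That maximality is false for infinitely many $n$: it holds essentially only when $n^2+n+7$ is squarefree, and already $n=4$ gives $n^2+n+7=27$, so $d_K=81$ while $\mathrm{disc}(\Z[\uplambda_0])=729$. The proposition is deliberately a statement about the \emph{order} $\Z[\uplambda_0]$ precisely to avoid any such hypothesis, so a proof may not assume it. Moreover, when the conductor is small, $d_K$ can be far below $n^4$, and then the bound $R_{\mathrm{fund}}\gg(\log d_K)^2$ no longer yields an absolute bound on $q=[\Z[\uplambda_0]^\times:\langle-1,\uplambda_0,\uplambda_2\rangle]$, since the comparison $(\log n)^2$ versus $\tfrac1{16}(\log(d_K/4))^2$ is asymptotically tight even in the best case $d_K\sim n^4$. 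What is needed is a regulator (or index) lower bound formulated for the order itself, using its exactly known discriminant $(n^2+n+7)^2$, together with a genuine elimination of the remaining small indices $q$ (your ``integrality of the trace'' remark is only a gesture; the usual argument shows that $q$-th roots of the relevant products of the $\uplambda_i$ cannot lie in the order) and the finite computation for small $n$. Until those steps are supplied, the proposal is a plausible programme rather than a proof.
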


We now show that any two $\upalpha_p, \upalpha_q$, as defined in Equation~\eqref{eq: def-alphabeta} are separated.

\begin{lemma}\label{lem: alphadiff}
    There is an effectively computable constant $c_1 = c_1(\upvarepsilon) > 0$ which, for sufficiently large $n$, can be chosen to be arbitrarily close to $\upvarepsilon$, so that, for two distinct $\upalpha_p, \upalpha_q$ with $\abs{\upalpha_q} > \abs{\upalpha_p}$ we have $\abs{\frac{\upalpha_p}{\upalpha_q}} < n^{-c_1 \uptau} \leq \frac{1}{2}$ and $\abs{\upalpha_p - \upalpha_q} > \br{1 - n^{-c_1 \uptau} } \abs{\upalpha_q} > \frac{1}{2} \abs{\upalpha_q} $.
\end{lemma}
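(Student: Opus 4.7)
The plan is to make the claim reduce, by a direct computation, to the fact that $\log\uplambda_0$ is of order $\log n$ while $\log|\uplambda_2|$ is of order $1/n$, combined with the lower bounds on $|2s-t|,|2t-s|,|s+t|$ provided by Condition~\eqref{eq: main-cond}. Since the constant term of $f$ is $-1$, we have $\uplambda_0\uplambda_1\uplambda_2=1$, so $\uplambda_1=(\uplambda_0\uplambda_2)^{-1}$. Substituting this in $\upalpha_p/\upalpha_q$ eliminates $\uplambda_1$ and yields, for each of the three unordered pairs, an identity of the form
\[
    \log\abs{\upalpha_p/\upalpha_q}\;=\;A\log\uplambda_0 + B\log\abs{\uplambda_2},
\]
with $(A,B)\in\{\pm(2s-t,s-2t),\;\pm(s-2t,-s-t),\;\pm(-s-t,\,t-2s)\}$ according to the pair chosen. (This is presumably the equation referred to in the outline as \eqref{eq: logalphaquotient}.) In particular $|A|\in\{|2s-t|,|s-2t|,|s+t|\}$ and $|B|\leq 3\uptau$.

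The second step is to feed this into Corollary~\ref{cor: root-logs} together with hypothesis~\eqref{eq: main-cond}. The hypothesis gives $|A|>\upvarepsilon\uptau$, while the corollary gives $\log\uplambda_0>\log n$ and $|\log|\uplambda_2||<2/n$, so by the reverse triangle inequality
\[
    \bigl|\log\abs{\upalpha_p/\upalpha_q}\bigr|\;\geq\;|A|\log\uplambda_0 - |B|\cdot\bigl|\log\abs{\uplambda_2}\bigr|\;\geq\;\uptau\!\left(\upvarepsilon\log n - 6/n\right).
\]
For any fixed $c_1<\upvarepsilon$, the right-hand side exceeds $c_1\uptau\log n$ once $n$ is larger than an effectively computable threshold depending on $\upvarepsilon$; this is the sense in which $c_1$ can be chosen arbitrarily close to $\upvarepsilon$. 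The dominant term $A\log\uplambda_0$ dictates the sign, which by the assumption $|\upalpha_q|>|\upalpha_p|$ must be negative, so $\log|\upalpha_p/\upalpha_q|\leq -c_1\uptau\log n$, i.e.\ $|\upalpha_p/\upalpha_q|\leq n^{-c_1\uptau}$. The hypothesis $\upvarepsilon\uptau>2$ from~\eqref{eq: main-cond} then ensures $c_1\uptau\log n\geq\log 2$ (for $c_1$ close enough to $\upvarepsilon$ and $n\geq 3$), so the quotient is at most $1/2$.

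For the separation inequality, I would simply factor out $\upalpha_q$ and apply the bound just established:
\[
    \abs{\upalpha_p-\upalpha_q}\;=\;\abs{\upalpha_q}\cdot\abs{1-\upalpha_p/\upalpha_q}\;\geq\;\bigl(1-n^{-c_1\uptau}\bigr)\abs{\upalpha_q}\;>\;\tfrac12\abs{\upalpha_q}.
\]
The only real obstacle is bookkeeping the loss from the parasitic $B\log|\uplambda_2|$ term so that $c_1$ can genuinely be taken arbitrarily close to $\upvarepsilon$; this loss is $O(\uptau/n)$, which is negligible against $\upvarepsilon\uptau\log n$ for large $n$, but it is what forces the ``sufficiently large $n$'' clause in the statement and what will need to be carried forward consistently in the subsequent lemmas.
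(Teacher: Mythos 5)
Your proposal is correct and follows essentially the same route as the paper: express $\log\abs{\upalpha_p/\upalpha_q}$ as an integer combination $A\log\uplambda_0 + B\log\abs{\uplambda_2}$ (your coefficients agree with the paper's Equation~\eqref{eq: logalphaquotient}, obtained there via the relations of Lemma~\ref{lem: root-approx} rather than $\uplambda_0\uplambda_1\uplambda_2=1$, which is the same fact), then apply the reverse triangle inequality with $\abs{A}>\upvarepsilon\uptau$ from Condition~\eqref{eq: main-cond}, $\abs{B}\leq 3\uptau$, and Corollary~\ref{cor: root-logs}, and conclude the separation bound by factoring out $\upalpha_q$. Your bookkeeping of the $O(\uptau/n)$ loss and of the ``sufficiently large $n$'' caveat matches the paper's treatment, so nothing further is needed.
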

\begin{proof}
   We have $\abs{\upalpha_p - \upalpha_q} = \abs{\upalpha_q} \br{ 1 - \abs{\frac{\upalpha_p}{\upalpha_q}} }$, i.e. we have to prove that the fraction is at most $\frac{1}{2}$. To that end, we look at the logarithm $-\log\abs{ \frac{\upalpha_p}{\upalpha_q} }$ and bound it from below.

    By Lemma~\ref{lem: root-approx}, we can express everything and especially logarithms of quotients of $\upalpha_i$ in terms of $\log\uplambda_0$ and $\log\abs{\uplambda_2}$.
    \begin{alignat}{5}\label{eq: logalphaquotient}
        \log\abs{\frac{\upalpha_p}{\upalpha_q}} =
        \left\{
        \begin{aligned}
            &&\br{2s-t} \, &\log\uplambda_0 &-\br{2t-s} \, &\log\abs{\uplambda_2} \;\; &\text{ if } \set{p,q} = \set{0,1} \\
            &&\br{s-2t} \, &\log\uplambda_0 &-\br{s+t} \, &\log\abs{\uplambda_2} \;\; &\text{ if } \set{p,q} = \set{0,2} \\
            &&\br{s+t} \, &\log\uplambda_0  &-\br{t-2s} \, &\log\abs{\uplambda_2} \;\; &\text{ if } \set{p,q} = \set{1,2}
        \end{aligned}
        \right.
    \end{alignat}

    We use the reverse triangle inequality to bound the positive $-\log\abs{\frac{\upalpha_p}{\upalpha_q}}$. The absolute value of the coefficient of $\log\abs{\uplambda_2}$ is at most $3\uptau$, whereas Condition~\eqref{eq: main-cond} guarantees that the absolute value of the coefficient of $\log\uplambda_0$ is at least $\upvarepsilon \uptau$. If we put everything together and use Corollary~\ref{cor: root-logs}  to estimate the logarithms, we get
    \[
        - \log\abs{\frac{\upalpha_p}{\upalpha_q}} > \upvarepsilon \uptau \log\uplambda_0 - 3\uptau \log\abs{\uplambda_2} > \upvarepsilon \uptau \log n - \frac{3 \uptau}{n} - \frac{3\uptau}{n^2},
    \]
    and unless both $\uptau$ and $n$ are already small, there exists an effectively computable constant $c_1 > 0$, dependent only on $\upvarepsilon$, and for sufficiently large $n$ arbitrarily close to $\upvarepsilon$, such that
    \[
        \upvarepsilon \uptau \log n - \frac{3 \uptau}{n} - \frac{3\uptau}{n^2} > c_1 \uptau \log n \geq \log 2.
    \]
\end{proof}

\begin{lemma}\label{lem: prodbymax}
    Let $a,b,c$ be positive real numbers with $abc = 1$, then
    \[
        \max\br{a,b} \max\br{a,c} \geq \sqrt{ \max\br{a,b,c} }.
    \]
\end{lemma}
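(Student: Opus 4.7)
The plan is to split into cases according to whether $a$ is the global maximum among $a,b,c$, since the left-hand side is symmetric in $b$ and $c$ but treats $a$ asymmetrically.

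First I would set $M := \max(a,b,c)$ and handle the case $a = M$. Here both pairwise maxima collapse to $a$, so the left-hand side equals $a^2$. From $abc = 1$ together with $a \geq b, a \geq c$ one obtains $a^3 \geq abc = 1$, so $a \geq 1$, and consequently $a^2 \geq a^{1/2} = \sqrt{M}$.

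For the remaining case $a < M$, the maximum is attained at $b$ or at $c$; by the symmetry of the statement in $b$ and $c$ I may assume $M = b$. Then $\max(a,b) = b = M$, and the key observation is the elementary inequality $\max(a,c) \geq \sqrt{ac}$ (the larger of two positive numbers dominates their geometric mean). Combined with $ac = 1/b = 1/M$, this yields $\max(a,c) \geq 1/\sqrt{M}$, so
\[
    \max(a,b)\max(a,c) \;\geq\; M \cdot \frac{1}{\sqrt{M}} \;=\; \sqrt{M},
\]
which is the claim.

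There is no real obstacle here: the whole argument is a short case distinction using only $abc = 1$ and $\max \geq$ geometric mean. The only thing to be careful about is invoking the $b\leftrightarrow c$ symmetry of the left-hand side in the second case, so that writing $M=b$ rather than $M=c$ is without loss of generality.
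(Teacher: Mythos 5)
Your proof is correct and follows essentially the same route as the paper: the same case split on whether $a$ is the global maximum, with the second case resolved by noting $\max(a,c)\geq\sqrt{ac}=1/\sqrt{M}$, which is just a rephrasing of the paper's observation that one of $a,c$ must be at least $1/\sqrt{b}$.
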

\begin{proof}
    Let $a = \max\br{a,b,c}$, then $abc = 1$ implies that $a \geq 1$ and thus $\max\br{a,b}\max\br{a,c} = a^2 \geq \sqrt{a}$ holds. If instead, after possibly renaming $b$ and $c$, the maximum is $b = \max\br{a,b,c}$ then $a \geq \frac{1}{\sqrt{b}}$ or $c \geq \frac{1}{\sqrt{b}}$, with equality in the case that $a=c$ for $abc = 1$ to hold. This in turn means that $\max\br{a,b}\max\br{a,c} \geq \frac{b}{\sqrt{b}} = \sqrt{b}$.
\end{proof}

\begin{lemma}\label{lem: alphamax}
   We have
   \[
        n^{3 \uptau} \geq \max\br{\abs{\upalpha_0}, \abs{\upalpha_1}, \abs{\upalpha_2}} \geq n^{c_1 \uptau}.
   \]
\end{lemma}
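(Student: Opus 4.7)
The plan is to handle the upper and lower bounds separately.

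For the upper bound, since $s$ and $t$ may have either sign, I would first uniformly bound each factor $\abs{\uplambda_i}^{\pm 1}$ that can appear in an $\upalpha_i$. By Lemma~\ref{lem: root-approx}, one has $\uplambda_0 < n + 2/n$ and $\abs{\uplambda_1}^{-1} < n+2$, while each of $\abs{\uplambda_0}^{-1}, \abs{\uplambda_1}, \abs{\uplambda_2}, \abs{\uplambda_2}^{-1}$ is at most $2$. Every factor in $\abs{\upalpha_i} = \abs{\uplambda_a}^s \abs{\uplambda_b}^t$ is therefore bounded by $n+2$, so
\[
    \abs{\upalpha_i} \leq \br{n+2}^{\abs{s}+\abs{t}} \leq \br{n+2}^{2\uptau} \leq n^{3\uptau},
\]
the last step being the routine estimate $(n+2)^2 \leq n^3$ for $n \geq 3$. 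Taking the maximum over $i$ yields the upper bound.

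For the lower bound, I would exploit the multiplicative identity $\upalpha_0\upalpha_1\upalpha_2 = 1$. Since the constant term of $f$ is $-1$, Vieta's formulas give $\uplambda_0\uplambda_1\uplambda_2 = 1$, so
\[
    \upalpha_0\upalpha_1\upalpha_2 = \br{\uplambda_0\uplambda_1\uplambda_2}^{s+t} = 1.
\]
Let $M = \max_i \abs{\upalpha_i}$, and let $\abs{\upalpha_p}, \abs{\upalpha_q}$ denote the two non-maximal absolute values. Applying Lemma~\ref{lem: alphadiff} twice, once to each non-maximal value paired with the maximum, yields $\abs{\upalpha_p}, \abs{\upalpha_q} \leq M \cdot n^{-c_1 \uptau}$. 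Substituting into $M \cdot \abs{\upalpha_p} \cdot \abs{\upalpha_q} = 1$ gives $1 \leq M^3 n^{-2 c_1 \uptau}$, i.e.\ $M \geq n^{(2c_1/3)\uptau}$. Because Lemma~\ref{lem: alphadiff} only demands that $c_1$ be some effectively computable constant depending on $\upvarepsilon$ (and indeed allows it to be arbitrarily close to $\upvarepsilon$ for large $n$), I may absorb the factor $2/3$ and relabel, obtaining $M \geq n^{c_1 \uptau}$ with a possibly smaller constant of the same form.

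The only real obstacle is book-keeping: keeping the uniformity in the signs of $s$ and $t$ correct for the upper bound, and tracking the constant $c_1$ through the cube root appearing in the lower bound. No new analytic input beyond Lemmas~\ref{lem: root-approx} and~\ref{lem: alphadiff} is needed.
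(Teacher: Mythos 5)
Your proof is essentially correct, but it takes a genuinely different route from the paper for the lower bound, and there is one caveat you should repair. The paper argues directly in the basis $\log\uplambda_0,\log\abs{\uplambda_2}$: it writes $\log\abs{\upalpha_i}$ as an integer combination of these two logarithms, notes that the maximal one has a $\log\uplambda_0$-coefficient of size $\uptau$ while the $\log\abs{\uplambda_2}$-contribution is $O(\uptau/n)$, and so obtains $\log\max_i\abs{\upalpha_i}\geq \uptau\log n-2\uptau\br{\tfrac1n+\tfrac1{n^2}}$, i.e.\ the maximum is essentially $n^{\uptau}$; the stated bound $n^{c_1\uptau}$ is then a deliberate weakening with the \emph{same} $c_1$ as in Lemma~\ref{lem: alphadiff} (the paper even remarks an absolute constant like $0.19$ would do). Your upper bound via the crude estimates of Lemma~\ref{lem: root-approx} is fine, and your lower bound via $\upalpha_0\upalpha_1\upalpha_2=\br{\uplambda_0\uplambda_1\uplambda_2}^{s+t}=1$ combined with Lemma~\ref{lem: alphadiff} is a clean alternative that avoids any bookkeeping about which of $\abs{s},\abs{t}$ equals $\uptau$.

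The caveat: your version only yields $M\geq n^{\frac{2c_1}{3}\uptau}$, and ``relabelling $c_1$'' is not harmless, because $c_1$ is a global constant reused downstream with its specific relation to $\upvarepsilon$. In particular, in Lemma~\ref{lem: contradiction} the paper uses that $c_1$ can be taken arbitrarily close to $\upvarepsilon$, so that $\tfrac{c_1}{2}\uptau\leq 1$ forces $\upvarepsilon\uptau\leq 2$, contradicting Condition~\eqref{eq: main-cond}; if $c_1$ were silently shrunk to $\tfrac{2}{3}c_1\approx\tfrac{2}{3}\upvarepsilon$, that step would only give $\upvarepsilon\uptau\leq 3$ and the contradiction evaporates. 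Fortunately the loss is avoidable inside your own argument: writing the three absolute values as $M\geq B\geq A$ (they are pairwise distinct, since by the proof of Lemma~\ref{lem: alphadiff} and Condition~\eqref{eq: main-cond} no two $\upalpha_i$ have equal absolute value---a point you should state, since Lemma~\ref{lem: alphadiff} as formulated assumes strict inequality), apply the lemma to all three pairs to get $B\leq Mn^{-c_1\uptau}$ and $A\leq Bn^{-c_1\uptau}\leq Mn^{-2c_1\uptau}$, whence $1=MAB\leq M^3n^{-3c_1\uptau}$ and $M\geq n^{c_1\uptau}$ with the original constant. With that adjustment your proof matches the statement exactly, though it still gives a weaker true bound (exponent $c_1\uptau$ rather than the paper's nearly $\uptau$), which is all the later arguments need.
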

\begin{proof}
    Analogously to the proof of Lemma~\ref{lem: alphadiff}, we have for the maximum $\upalpha$ that
    \begin{alignat*}{4}
        \log\abs{\upalpha}= 
            \left\{
        \begin{aligned}
            &&(s-t)\, &\log\uplambda_0 \; &-t \, &\log\abs{\uplambda_2} \;\; \text{ if } \upalpha = \upalpha_0 \\
            &&-s\, &\log\uplambda_0 \; &+\br{t-s} \, &\log\abs{\uplambda_2} \;\; \text{ if } \upalpha = \upalpha_1 \\
            &&t \, &\log\uplambda_0 \; &+s \, &\log\abs{\uplambda_2} \;\; \text{ if } \upalpha = \upalpha_2.
        \end{aligned}
        \right.
    \end{alignat*}
    Since $\abs{\upalpha}$ is the maximum of $\abs{\upalpha_0}, \abs{\upalpha_1}, \abs{\upalpha_2}$, its logarithm is the largest of the three expressions given above. Since it is either $\abs{-s} = \uptau$ or $\abs{t} = \uptau$, we can infer
    \[
        3\uptau \log n > \log\abs{\upalpha} \geq \uptau \log\uplambda_0 - 2\uptau \log\abs{\uplambda_2} > \uptau \log n - 2\uptau \br{ \frac{1}{n} + \frac{1}{n^2} }
    \]
    in each of the three cases for $\upalpha$. We can certainly bound the right-hand side by $c_1 \uptau \log n$, the same expression as used in Lemma~\ref{lem: alphadiff} but could also use a constant that does not depend on $\upvarepsilon$, such as $0.19$: Since $\uptau \log n - 2\uptau \br{\frac{1}{n}+\frac{1}{n^2}} > 0.19\, \uptau \log n$ if and only if $1-\br{ \frac{2}{n\log n} + \frac{2}{n^2\log n} } > 0.19$ which is true for $n\geq 3$.
\end{proof}

\begin{lemma}\label{lem: alphasalad}
    We have
    \[
        \frac{ \max\br{\abs{\upalpha_k}, \abs{\upalpha_l}} }{ \max\br{\abs{\upalpha_j}, \abs{\upalpha_k}} \, \max\br{\abs{\upalpha_j}, \abs{\upalpha_l}} } < 2\abs{y}.
    \]
\end{lemma}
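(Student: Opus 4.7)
I would split into two cases depending on whether $\abs{\upalpha_j}$ attains the maximum $M := \max_i \abs{\upalpha_i}$. A preliminary observation: by Vieta's formulae applied to $f(X) = X^3-(n-1)X^2-(n+2)X-1$, we have $\uplambda_0\uplambda_1\uplambda_2 = 1$, hence $\abs{\upalpha_0\upalpha_1\upalpha_2} = 1$, so $M \geq 1$.

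If $\abs{\upalpha_j} = M$, both maxima in the denominator of the quotient collapse to $\abs{\upalpha_j}$, the numerator is at most $\abs{\upalpha_j}$, and the quotient is therefore at most $1/\abs{\upalpha_j} \leq 1 < 2\abs{y}$, using $\abs{y}\geq 2$. This case is essentially immediate.

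Otherwise, by the symmetry of the expression in $k$ and $l$, I may assume $\abs{\upalpha_k} = M$. Then $\max\br{\abs{\upalpha_k}, \abs{\upalpha_l}} = \abs{\upalpha_k} = \max\br{\abs{\upalpha_j}, \abs{\upalpha_k}}$, and the quotient simplifies to $1/\max\br{\abs{\upalpha_j}, \abs{\upalpha_l}}$. The task thus reduces to showing $\max\br{\abs{\upalpha_j}, \abs{\upalpha_l}} > 1/(2\abs{y})$.

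The principal obstacle is this last inequality, which I would establish by contradiction. Assuming both $\abs{\upalpha_j}$ and $\abs{\upalpha_l}$ are at most $1/(2\abs{y})$, one obtains $\abs{\upalpha_j y}, \abs{\upalpha_l y} \leq 1/2$ and $\abs{\upalpha_k} = 1/(\abs{\upalpha_j}\abs{\upalpha_l}) \geq 4\abs{y}^2$. The subcase $x = 0$ yields $\abs{F(x,y)} = \abs{y}^3 \geq 8$, contradicting $\abs{F(x,y)} = 1$. For $\abs{x} \geq 1$, the reverse triangle inequality gives $\abs{\upbeta_j}, \abs{\upbeta_l} \geq 1/2$, hence $\abs{\upbeta_k} \leq 4$ by $\abs{\upbeta_j\upbeta_k\upbeta_l} = 1$. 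A further split on whether $\abs{x}$ is small or large compared to $\abs{\upalpha_k y}$ should then deliver a contradiction in each subcase: either $\abs{\upbeta_k} \geq \abs{\upalpha_k y} - \abs{x}$ is forced to exceed $4$, or else $\abs{x}$ is so large that $\abs{\upbeta_j} \geq \abs{x} - 1/2$ overtakes $\abs{\upbeta_k}$, contradicting the minimality of $\abs{\upbeta_j}$. The careful book-keeping of these subcases is the only real technical point.
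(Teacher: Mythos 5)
Your argument is correct, but it takes a genuinely different and more elementary route than the paper. The paper bounds the quotient by $\frac{1}{\abs{\upalpha_j}}$ in all cases and then either invokes Lemma~\ref{lem: alphamax} (when $\abs{\upalpha_j}$ is maximal, with the caveat ``unless $n$ and $\uptau$ are already absolutely bounded'') or, when $\abs{\upalpha_j}$ is not maximal, feeds in the upstream smallness of $\abs{\upbeta_j}$ from Inequality~\eqref{eq: betaj-ub} to conclude $\abs{\upalpha_j} > \frac{1}{2\abs{y}}$. You instead split on whether $\abs{\upalpha_j}$ is the maximum, dispatch that case using only $\abs{\upalpha_0\upalpha_1\upalpha_2}=1$ (so the maximum is at least $1$), and in the remaining case reduce to $\max\br{\abs{\upalpha_j},\abs{\upalpha_l}} > \frac{1}{2\abs{y}}$, which you prove by contradiction using only $\abs{F(x,y)}=1$, $\abs{y}\geq 2$ and the minimality of $\abs{\upbeta_j}$; your final split on $\abs{x}$ versus $\abs{\upalpha_k y}$ is only sketched, but it does close (e.g.\ split at $\abs{x} \lessgtr \frac{1}{2}\abs{\upalpha_k y}$ with $\abs{\upalpha_k y}\geq 4\abs{y}^3 \geq 32$: the first branch forces $\abs{\upbeta_k}\geq 16 > 4$, the second forces $\abs{\upbeta_j} > 15 > \abs{\upbeta_k}$). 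What your route buys is self-containedness: no appeal to Lemmas~\ref{lem: alphadiff}--\ref{lem: alphamax} or to Inequality~\eqref{eq: betaj-ub}, and no conditional phrasing about $n$ and $\uptau$ being large. What it loses is a byproduct the paper relies on later: the paper's proof establishes the pointwise bound $\abs{\upalpha_j} > \frac{1}{2\abs{y}}$, recorded as Equation~\eqref{eq: alphaj-y} and reused in Lemmas~\ref{lem: coeff-ub} and~\ref{lem: non-zeroLF}, whereas your weaker intermediate statement $\max\br{\abs{\upalpha_j},\abs{\upalpha_l}} > \frac{1}{2\abs{y}}$ does not yield that, so substituting your proof wholesale would require re-deriving Equation~\eqref{eq: alphaj-y} separately.
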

\begin{proof}
    Irrespective of whether $\upalpha_j$ has the largest absolute value of the three or not, we can bound the quotient by its inverse, i.e.
    \[
        \frac{ \max\br{\abs{\upalpha_k}, \abs{\upalpha_l}} }{ \max\br{\abs{\upalpha_j}, \abs{\upalpha_k}} \, \max\br{\abs{\upalpha_j}, \abs{\upalpha_l}} } < \frac{1}{ \abs{\upalpha_j} }.
    \]

    If $\abs{\upalpha_j} = \max\br{\abs{\upalpha_0}, \abs{\upalpha_1}, \abs{\upalpha_2}}$, then the right-hand side is at most $n^{-c_1 \uptau}$, by Lemma~\ref{lem: alphamax}. And unless $n$ and $\uptau$ are already absolutely bounded, we can assume this to be smaller than any constant, say, $4 \leq 2\abs{y}$.

    Otherwise, we use Inequality~\eqref{eq: betaj-ub}, which by the same argument is smaller still than, say $\frac{1}{2}$, and get
    \[
        \frac{1}{2} > \abs{ x - \upalpha_j y } \geq \abs{x} - \abs{\upalpha_j} \abs{y}.
    \]

    Since $x \neq 0$---the only solutions with $x=0$ have $y=\pm 1$---we have $\abs{x} \geq 1$ and thus the above inequality can be read as
    \begin{equation}\label{eq: alphaj-y}
        \abs{\upalpha_j} > \frac{1}{2 \abs{y}},
    \end{equation}
    which proves the assertion.
\end{proof}

\begin{lemma}\label{lem: coeff-ub}
    Let $\upbeta_0 = \pm \uplambda_0^a \uplambda_2^b$ be the decomposition of the unit $\upbeta_0 \in \Z[\uplambda_0]$ into powers of the fundamental units $\uplambda_0, \uplambda_2$ of the mentioned order. Then we have
    \[
        \max\br{\abs{a}, \abs{b}} = \uptheta\br{ \frac{\log\abs{y}}{\log n} + \uptau }.
    \]
\end{lemma}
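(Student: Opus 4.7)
My plan is to set up a linear system for $a$ and $b$ coming from the three conjugates of $\upbeta_0$, and then extract upper and lower bounds on each side by bounding $\log|\upbeta_i|$ in two directions.

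First, I would observe that the automorphism $\uplambda_0 \mapsto \uplambda_1 \mapsto \uplambda_2 \mapsto \uplambda_0$ sends $\upalpha_0 = \uplambda_0^s \uplambda_1^t$ to $\upalpha_1, \upalpha_2$ cyclically, and therefore sends the unit decomposition $\upbeta_0 = \pm \uplambda_0^a \uplambda_2^b$ to $\upbeta_1 = \pm \uplambda_1^a \uplambda_0^b$ and $\upbeta_2 = \pm \uplambda_2^a \uplambda_1^b$. Writing $L_i = \log|\uplambda_i|$ and $B_i = \log|\upbeta_i|$, this yields
\[
    B_0 = aL_0 + bL_2, \qquad B_1 = aL_1 + bL_0, \qquad B_2 = aL_2 + bL_1.
\]
By Corollary~\ref{cor: root-logs} we have $L_0 = \log n + O(1/n^2)$, $L_1 = -\log n + O(1/n)$, and $L_2 = \uptheta(1/n)$. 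Any two of the three equations suffice to solve for $(a,b)$, and the determinant of any such $2\times 2$ subsystem is of size $\uptheta((\log n)^2)$, with the off-diagonal $L_2$ contributing only a negligible perturbation.

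Next, I would establish the upper bound $\max(|a|,|b|) \ll \log|y|/\log n + \uptau$. To bound $|B_i|$ from above, I use $|\upbeta_i| \leq |x| + |\upalpha_i||y|$, and I control $|x|$ by $|x| \leq |\upbeta_j| + |\upalpha_j||y|$. From Inequality~\eqref{eq: betaj-ub} we already know $|\upbeta_j| < 1$ (say), so $|x| \ll |\upalpha_j||y| + 1$; combined with Lemma~\ref{lem: alphamax}'s bound $|\upalpha_i| \leq n^{3\uptau}$, this gives $\log|\upbeta_i| \ll \uptau \log n + \log|y|$ for each $i$. The relation $B_0 + B_1 + B_2 = 0$ (which is $|\upbeta_0 \upbeta_1 \upbeta_2| = 1$) then upgrades this to $|B_i| \ll \uptau \log n + \log|y|$ in both directions. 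Inverting the linear system yields $\max(|a|,|b|) \ll \max_i |B_i|/\log n \ll \uptau + \log|y|/\log n$.

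For the lower bound, I would use the pair $\upbeta_k,\upbeta_l$ which are bounded below by Inequality~\eqref{eq: betakl-lb} combined with Lemma~\ref{lem: alphadiff}: $|\upbeta_k \upbeta_l| \gg |y|^2 \max(|\upalpha_j|,|\upalpha_k|) \max(|\upalpha_j|,|\upalpha_l|)$. Applying Lemma~\ref{lem: prodbymax} to $|\upalpha_0 \upalpha_1 \upalpha_2| = N_{K/\Q}(\upalpha_0)^{\,\cdot} $ (which equals $\pm 1$ as a product over all conjugates) together with Lemma~\ref{lem: alphamax} gives $|\upbeta_k \upbeta_l| \gg |y|^2 n^{c_1 \uptau/2}$, so $\max_i |B_i| \gg \log|y| + \uptau \log n$. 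Feeding this back through the inverted linear system, using that the dominant entries of the inverse matrix scale like $1/\log n$, delivers $\max(|a|,|b|) \gg \log|y|/\log n + \uptau$, which completes the $\uptheta$-asymptotic.

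The main technical obstacle I anticipate is the bookkeeping for the inversion step: one has to verify that in each of the two reductions $(a,b) \leftrightarrow (B_i,B_j)$ the dominant contribution $\log\uplambda_0 \approx \log n$ actually controls $\max(|a|,|b|)$, rather than being cancelled by the smaller $L_2 \approx 1/n$ terms. This is a routine but careful estimate; the $B_0 + B_1 + B_2 = 0$ identity is useful here because it lets one pick the most convenient two of the three equations in any given case.
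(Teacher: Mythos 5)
Your proposal is correct and rests on the same backbone as the paper's proof: conjugate the decomposition $\upbeta_0 = \pm\uplambda_0^a\uplambda_2^b$, set up a $2\times 2$ linear system for $(a,b)$ whose matrix has entries $\log\abs{\uplambda_i}$ and determinant of size $\uptheta\br{\br{\log n}^2}$, and convert two-sided bounds on the $\log\abs{\upbeta_i}$ into the claimed $\uptheta$-estimate. Where you genuinely deviate is in how you pin down $\max_i\abs{\log\abs{\upbeta_i}}$. The paper uses the single identity $\upbeta_k = \upbeta_j + y\br{\upalpha_j-\upalpha_k}$, i.e.\ Equation~\eqref{eq: logy-by-logbetak}, which gives $\log\abs{\upbeta_k} = \log\abs{y} + \log\abs{\upalpha_j-\upalpha_k} + O(1)$ in both directions at once (and is reused later in Lemma~\ref{lem: contradiction}), then converts $\log\abs{\upalpha_j-\upalpha_k}$ into $\uptheta\br{\uptau\log n}$ via Lemmas~\ref{lem: alphadiff} and~\ref{lem: alphamax}. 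You instead split the task: an upper bound from the triangle inequality, $\abs{x}\ll\abs{\upalpha_j}\abs{y}+1$ and Lemma~\ref{lem: alphamax}, upgraded to a two-sided bound through the unit relation $\log\abs{\upbeta_0}+\log\abs{\upbeta_1}+\log\abs{\upbeta_2}=0$; and a lower bound from $\abs{\upbeta_k\upbeta_l}\gg\abs{y}^2 n^{c_1\uptau/2}$, i.e.\ the chain of Inequality~\eqref{eq: betakl-lb}, Lemma~\ref{lem: alphadiff}, Lemma~\ref{lem: prodbymax} and Lemma~\ref{lem: alphamax} that the paper already uses for Inequality~\eqref{eq: betaj-ub} (your appeal to $\abs{\upalpha_0\upalpha_1\upalpha_2}=1$ is justified, since the product of the roots of $f$ is $1$). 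Both routes are sound and of comparable cost. One small point to tidy up: for the direction $\max\br{\abs{a},\abs{b}} \gg \max\br{\abs{\log\abs{\upbeta_k}},\abs{\log\abs{\upbeta_l}}}/\log n$ you should invoke the forward matrix, whose entries are $\ll \log n$, rather than the entries of $M^{-1}$ of size $\uptheta\br{1/\log n}$ --- the latter only yield the upper bound on $\max\br{\abs{a},\abs{b}}$; this is a one-line fix (and the paper is equally terse at this point).
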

\begin{proof}
    By conjugation, we have that $\upbeta_0 = \pm \uplambda_0^a\uplambda_2^b, \upbeta_1 = \pm \uplambda_1^{a} \uplambda_0^{b}$, and $\upbeta_2 = \pm \uplambda_2^a\uplambda_1^{b}$. We take the decompositions of $\upbeta_k$ and $\upbeta_l$ and take the logarithm. This gives the system of linear equations
    \begin{equation}\label{eq: betakl-decomp-LGS}
        M
        \begin{pmatrix}
            a \\ b
        \end{pmatrix}
        =
        \begin{pmatrix}
            \log\abs{\upbeta_k} \\ \log\abs{\upbeta_l}
        \end{pmatrix},
    \end{equation}
    where, depending on $j$,
    \[ 
        M \in \set{
            \begin{pmatrix}
                \log\abs{ \uplambda_1 } & \log\abs{ \uplambda_0 } \\
                \log\abs{ \uplambda_2 } & \log\abs{ \uplambda_1 }
            \end{pmatrix},
            \begin{pmatrix}
                \log\abs{ \uplambda_0 } & \log\abs{ \uplambda_1 } \\
                \log\abs{ \uplambda_1 } & \log\abs{ \uplambda_0 }
            \end{pmatrix},
            \begin{pmatrix}
                \log\abs{ \uplambda_0 } & \log\abs{ \uplambda_2 } \\
                \log\abs{ \uplambda_1 } & \log\abs{ \uplambda_0 }
            \end{pmatrix}
        }.
    \]

    Since $\log\uplambda_0 = \uptheta\br{\log n}, \log\uplambda_1 = \uptheta\br{-\log n}$ and $\log\uplambda_2 = \uptheta\br{\frac{1}{n}}$ by Corollary~\ref{cor: root-logs}, we have that $\abs{\det M} = \uptheta\br{\br{\log n}^2}$ in either case. In particular, the determinant is non-zero, the matrix thus invertible.    

    We then multiply Equation~\eqref{eq: betakl-decomp-LGS} with the inverse $M^{-1}$ and take the column-wise-maximum norm. We can calculate it for $M$ by taking the row-wise-maximum norm of $M$ and dividing by $\abs{\det M}$, which gives a $\uptheta\br{\frac{1}{\log n}}$. We thus get, with the consistency $\norm{ M^{-1} \vec{\upbeta} } \leq \norm{M^{-1}} \cdot \norm{ \vec{\upbeta} }$,
    \[
        \max\br{ \abs{a}, \abs{b} } = \uptheta\br{ \frac{ \max\br{\abs{\log\abs{\upbeta_k}}, \abs{\log\abs{\upbeta_l}}} }{ \log n } }.
    \]    

    Furthermore, we have
    \begin{align}\label{eq: logy-by-logbetak}
        \log\abs{\upbeta_k}  &= \log\abs{x - \upalpha_k y + \upalpha_j y - \upalpha_j y}  \nonumber \\
        &= \log\abs{y} + \log\abs{\upalpha_j - \upalpha_k} + \log\abs{ 1 + \abs{\frac{\upbeta_j}{y \abs{\upalpha_j - \upalpha_k}}} },
    \end{align}
    and since $\abs{\upalpha_j - \upalpha_k} \geq \frac{1}{2} \abs{\upalpha_j} \geq \frac{1}{4} \abs{y}^{-1}$ by Equation~\eqref{eq: alphaj-y}, while $\abs{\upbeta_j} \ll \abs{y}^{-2} n^{-\frac{c_1}{2} \uptau}$ by Inequality~\eqref{eq: betaj-ub}, the last logarithm is very small and can certainly be bounded by $1$. Thus, 
    \[
        \max\br{\abs{\log\abs{\upbeta_k}}, \abs{\log\abs{\upbeta_l}}} = \uptheta\br{ \log\abs{y} + \log\max\br{\abs{ \upalpha_j - \upalpha_k }, \abs{\upalpha_j - \upalpha_l}} }.
    \]

    Finally, we have by Lemma~\ref{lem: alphadiff} that $\abs{ \upalpha_j - \upalpha_k }$ is at least $\frac{1}{2} \max\br{\abs{\upalpha_j}, \abs{\upalpha_k}}$, and it is at most $2$ times the maximum. That is to say,
    \[
        \log\max\br{\abs{ \upalpha_j - \upalpha_k }, \abs{\upalpha_j - \upalpha_l}} = \uptheta\br{ \log\max\br{\abs{\upalpha_j}, \abs{\upalpha_k}, \abs{\upalpha_l}} },
    \]
    which by Lemma~\ref{lem: alphamax} is a $\uptheta\br{ \uptau \log n }$. This gives us that
    \[
        \max\br{\abs{\log\abs{\upbeta_k}}, \abs{\log\abs{\upbeta_l}}} = \uptheta\br{\log\abs{y} + \uptau \log n}
    \]
    and thus
    \[
        \max\br{\abs{a}, \abs{b}} = \uptheta\br{ \frac{\log\abs{y}}{\log n} + \uptau },
    \]
    which proves the assertion.
\end{proof}

\section{Proof of Theorem~\ref{thm: main}}\label{sec: proof}

With most of the technical considerations done in the previous section, we can formally finish the proof of Theorem~\ref{thm: main}, as outlined in Section~\ref{sec: outline}. That is to say, we now use bounds for linear forms in logarithms, namely Proposition~\ref{prop: BaWh}, to deduce bounds for the size of the parameters of our Thue equations.

\begin{lemma}\label{lem: logy-ub}
    We have, for some effectively computable constant $c_2 > 0$,
    \[
        \log\abs{y} < c_2\, \uptau\, \br{\log n}^3 \br{ \log\uptau + \log\log n }.
    \]
\end{lemma}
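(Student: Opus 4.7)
\textbf{The plan} is to bound $\Uplambda$ from~\eqref{eq: LF} below by Baker--Wüstholz (Proposition~\ref{prop: BaWh}) and compare with its upper bound. The key preparatory step is to rewrite $\Uplambda$ as a $\Z$-linear combination of logarithms of algebraic numbers whose heights do not depend on $\abs{y}$, pushing the $s,t$ dependence in $\log\abs{\upbeta_l/\upbeta_k}$ entirely into integer coefficients.

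To do so, I first use Proposition~\ref{prop: Tho}: each of $\upbeta_k, \upbeta_l$ is a unit in $\Z[\uplambda_0]$, so it decomposes as $\upbeta_i = \pm\uplambda_0^{a_i}\uplambda_2^{b_i}$. Hence
\[
    \Uplambda = A\,\log\uplambda_0 + B\,\log\abs{\uplambda_2} + \log\abs{\frac{\upalpha_j-\upalpha_k}{\upalpha_j-\upalpha_l}},
\]
where $A = a_l - a_k$, $B = b_l - b_k$ are integers with $\max(\abs{A},\abs{B}) \ll \frac{\log\abs{y}}{\log n} + \uptau$ by Lemma~\ref{lem: coeff-ub}. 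Next I bound the three heights involved. The minimal polynomial of $\uplambda_0$ has Mahler measure $\uptheta(n)$ by Lemma~\ref{lem: root-approx}, giving $\h(\uplambda_0) = \h(\uplambda_2) = \uptheta(\log n)$. Subadditivity of $\h$ yields $\h(\upalpha_i) \leq \abs{s}\h(\uplambda_0) + \abs{t}\h(\uplambda_1) \ll \uptau\log n$, hence $\h(\upalpha_j - \upalpha_k) \leq \h(\upalpha_j) + \h(\upalpha_k) + \log 2 \ll \uptau\log n$, and finally $\h\!\br{(\upalpha_j - \upalpha_k)/(\upalpha_j - \upalpha_l)} \ll \uptau\log n$. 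Applying Proposition~\ref{prop: BaWh} with $t = 3$, $D = [K:\Q] = 3$ (making its numerical constant absolute) and $B \ll \log\abs{y}/\log n + \uptau + 3$ yields
\[
    \log\abs{\Uplambda} \geq -c\,\uptau(\log n)^3 \log\!\br{\frac{\log\abs{y}}{\log n} + \uptau + 3}
\]
for some effective $c > 0$.

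Combining this with the upper bound $\log\abs{\Uplambda} \leq -2\log\abs{y} - \frac{c_1}{2}\uptau\log n + O(1)$ coming from~\eqref{eq: LF} gives
\[
    \log\abs{y} \ll \uptau(\log n)^3\br{\log\log\abs{y} + \log\uptau + \log\log n}.
\]
The remaining obstacle—and the only real one—is the $\log\log\abs{y}$ term appearing implicitly on the right. I resolve it by a short case split: if $\log\abs{y} \leq \uptau(\log n)^3$, the claim is immediate; otherwise $\log\log\abs{y} > \log\uptau + 3\log\log n$, the inequality reduces to $\log\abs{y} \ll \uptau(\log n)^3\log\log\abs{y}$, and the elementary lemma $x \leq M\log x \Rightarrow x \ll M\log M$ applied with $x = \log\abs{y}$ and $M = \uptau(\log n)^3$ gives $\log\abs{y} \ll \uptau(\log n)^3 \log\!\br{\uptau(\log n)^3} \ll \uptau(\log n)^3(\log\uptau + \log\log n)$, as required.
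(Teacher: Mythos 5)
Your proposal is correct and follows essentially the same route as the paper: decompose $\upbeta_k,\upbeta_l$ into powers of the fundamental units $\uplambda_0,\uplambda_2$ (Proposition~\ref{prop: Tho}), bound the integer coefficients by Lemma~\ref{lem: coeff-ub} and the heights via the Mahler measure and subadditivity, apply Proposition~\ref{prop: BaWh} to $\Uplambda$, and compare with the upper bound from~\eqref{eq: LF}. The only difference is that you spell out the final self-referential step (removing the implicit $\log\log\abs{y}$ via the standard $x \leq M\log x \Rightarrow x \ll M\log M$ argument), which the paper leaves as ``which implies the assertion''.
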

\begin{proof}
    We take the decomposition $\upbeta_0 = \pm \uplambda_0^a \uplambda_2^b$ and conjugate the expression to get decompositions for $\upbeta_1$ and $\upbeta_2$. Only this time, we express everything in powers of $\uplambda_0$ and $\uplambda_2$ using the connections from Lemma~\ref{lem: root-approx}. This gives that $\upbeta_0 = \pm \uplambda_0^a\uplambda_2^b$, $\upbeta_1 = \pm \uplambda_0^{-a+b}\uplambda_2^{-a}$, and $\upbeta_2 = \pm \uplambda_0^{-b}\uplambda_2^{a-b}$.

    We can thus write for the logarithm of the quotient, $\log\abs{\frac{\upbeta_l}{\upbeta_k}} = a' \log\uplambda_0 + b'\log\abs{\uplambda_2}$, where both $a', b'$ are the respective linear combination of $a,b$ and can write the linear form in logarithms $\Uplambda$ from Equation~\eqref{eq: LF} as
    \[
        0 < \abs{ \Tilde{a} \log\uplambda_0 + \Tilde{b} \log\abs{
        \uplambda_2} + \log\abs{\frac{\upalpha_j - \upalpha_k}{\upalpha_j - \upalpha_l}} } \ll \frac{1}{\abs{y}^2 n^{\frac{c_1}{2} \uptau}}.
    \] 
   
   We now use lower bounds for this linear form in logarithms, Proposition~\ref{prop: BaWh}. 
    
    By Lemma~\ref{lem: coeff-ub}, we have that $\max\br{\abs{a'}, \abs{b'}} \ll \frac{\log\abs{y}}{\log n} + \uptau$. Due to the sub-additivity and sub-multiplicativity of the absolute logarithmic height, we have $\h\br{ \frac{\upalpha_j - \upalpha_k}{\upalpha_j - \upalpha_l} } \ll \uptau \, \h\br{\uplambda_0}$. And switching to the Mahler measure of the minimal polynomial $f$ of $\uplambda_0$ gives
    \begin{equation}\label{eq: heightbound}
        \h\br{\uplambda_0} = \M\br{f} = \frac{1}{3} \br{ \log\br{n+2} + \log\abs{\uplambda_0} + \log\abs{\uplambda_2} } = \uptheta\br{ \log n }.
    \end{equation}

    We plug everything into Proposition~\ref{prop: BaWh} and get that for some large effective constant $c_2 > 0$, since the constant in Proposition~\ref{prop: BaWh} is already larger than $10^{14}$, that
    \[
        -c_2\, \uptau \br{\log n}^3 \log\br{ \frac{\log\abs{y}}{\log n} + \uptau } < - \log\abs{y}
    \]
    holds, which implies the assertion.
\end{proof}

\begin{lemma}\label{lem: non-zeroLF}
    There exists a non-zero linear form $\Uplambda''$ in logarithms of $\upbeta_k, \upbeta_l, \upalpha_0, \upalpha_1, \upalpha_2, 2$ with coefficients $0$ or $\pm 1$, such that
    \[
        0 < \abs{\Uplambda''} < \frac{1}{n^{\frac{c_1}{2} \uptau}}
    \]
    holds.
\end{lemma}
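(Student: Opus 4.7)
\emph{Proof plan.} The plan is to carry out the extraction outlined in Section~\ref{sec: outline}, take the resulting linear form directly when it is non-zero, and handle the degenerate case by a separate algebraic argument.

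Let $\upalpha_v$ be whichever of $\upalpha_j,\upalpha_k$ is larger in absolute value and $\upalpha_u$ whichever of $\upalpha_j,\upalpha_l$ is larger. Then $\upalpha_j-\upalpha_k = \pm\upalpha_v\br{1-\upalpha'/\upalpha_v}$ and $\upalpha_j-\upalpha_l = \pm\upalpha_u\br{1-\upalpha''/\upalpha_u}$ with $\abs{\upalpha'/\upalpha_v},\abs{\upalpha''/\upalpha_u}\leq n^{-c_1\uptau}$ by Lemma~\ref{lem: alphadiff}. Applying $\abs{\log\abs{1-z}}\leq 2\abs{z}$ to the two bracketed factors, $\log\abs{(\upalpha_j-\upalpha_k)/(\upalpha_j-\upalpha_l)}$ differs from $\log\abs{\upalpha_v/\upalpha_u}$ by at most $4n^{-c_1\uptau}$. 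Combined with \eqref{eq: LF}, the linear form
\[
\Uplambda' := \log\abs{\upbeta_l/\upbeta_k} + \log\abs{\upalpha_v/\upalpha_u}
\]
satisfies $\abs{\Uplambda'}\ll n^{-c_1\uptau/2}$; a mild decrease of $c_1$ (still denoted $c_1$) absorbs the implicit constant, so $\abs{\Uplambda'}<n^{-c_1\uptau/2}$. If $\Uplambda'\neq 0$, I would set $\Uplambda'':=\Uplambda'$; this is a $\pm 1$-combination of four of the six required logarithms, with coefficient zero on $\log 2$ and on the third $\upalpha_i$, and we are done.

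Suppose instead $\Uplambda'=0$. Then $\upbeta_l\upalpha_v=\varepsilon\upbeta_k\upalpha_u$ for some $\varepsilon\in\set{-1,+1}$, which after substituting $\upbeta_i=x-\upalpha_iy$ becomes $x\br{\upalpha_v-\varepsilon\upalpha_u}=y\br{\upalpha_v\upalpha_l-\varepsilon\upalpha_u\upalpha_k}$. When $\upalpha_j$ is the global maximum, $\upalpha_u=\upalpha_v=\upalpha_j$ and the identity collapses to $\upbeta_l=\varepsilon\upbeta_k$: $\varepsilon=+1$ forces $\upalpha_k=\upalpha_l$, a contradiction, while $\varepsilon=-1$ forces $\upalpha_k+\upalpha_l=2x/y\in\Q$, which is impossible since $\upalpha_k+\upalpha_l=(\upalpha_0+\upalpha_1+\upalpha_2)-\upalpha_j$ is an integer (the trace of $\upalpha_0$) minus the irrational $\upalpha_j$---Proposition~\ref{prop: Tho} and $st\neq 0$ preclude the unit $\upalpha_0=\uplambda_0^{s-t}\uplambda_2^{-t}$ from equalling $\pm 1$, so it is irrational, and so is its Galois-conjugate $\upalpha_j$. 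Hence $\upalpha_u\neq\upalpha_v$, and the identity determines $x/y$ explicitly in the $\upalpha_i$; feeding this back into Siegel's identity \eqref{eq: Siegel}, isolating the now-tiny $\upbeta_j$ via \eqref{eq: betaj-ub}, and taking logarithms yields a new small linear form $\Uplambda''$ in $\log\abs{\upbeta_k},\log\abs{\upbeta_l},\log\abs{\upalpha_0},\log\abs{\upalpha_1},\log\abs{\upalpha_2},\log 2$ with coefficients in $\set{0,\pm 1}$. The $\log 2$ appears in the $\varepsilon=-1$ sub-case, where normalising the factor of $2$ in $x\br{\upalpha_u+\upalpha_v}=y\br{\upalpha_v\upalpha_l+\upalpha_u\upalpha_k}$ introduces it. Non-vanishing of $\Uplambda''$ is inherited from that of $\Uplambda$ in \eqref{eq: LF}.

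The principal obstacle is the bookkeeping in the degenerate case $\Uplambda'=0$: cycling through the finitely many sub-cases given by $\varepsilon\in\set{\pm 1}$ and the extremal $\upalpha_i$, verifying in each that the re-derived $\Uplambda''$ has coefficients confined to $\set{0,\pm 1}$ and retains the bound $n^{-c_1\uptau/2}$ without cancellation of the leading term.
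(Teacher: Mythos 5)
Your non-degenerate case is essentially the paper's argument: factor the dominant $\upalpha$ out of each difference in \eqref{eq: LF}, absorb the two logarithms of $1+O\br{n^{-c_1\uptau}}$ into the upper bound, and take $\Uplambda''=\Uplambda'$ whenever $\Uplambda'\neq 0$. Your treatment of the degenerate sub-case in which $\abs{\upalpha_j}$ is maximal is genuinely different from the paper's and is correct: the paper keeps the relation $\upbeta_l=-\upbeta_k$ and converts it, through Siegel's identity, into the non-zero form $\log\abs{\upalpha_k}-\log\abs{\upalpha_j}-\log 2$, whereas you exclude the relation outright, since $\upbeta_k+\upbeta_l=0$ forces $\upalpha_k+\upalpha_l=2x/y\in\Q$ while $\upalpha_k+\upalpha_l$ equals the rational integer $\upalpha_0+\upalpha_1+\upalpha_2$ minus the irrational $\upalpha_j$ (irrationality following, as you note, from Proposition~\ref{prop: Tho} and $st\neq 0$). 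That is a clean simplification of one branch of the case analysis.

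The gap lies in the remaining degenerate sub-cases, where $\upalpha_u\neq\upalpha_v$. There you only assert that substituting $x/y$ back into Siegel's identity ``yields'' a small linear form with coefficients in $\set{0,\pm 1}$, and you justify $\Uplambda''\neq 0$ by saying it is ``inherited from that of $\Uplambda$''. That inheritance claim is unfounded: $\Uplambda\neq 0$ concerns the combination $\log\abs{\upbeta_l/\upbeta_k}+\log\abs{\br{\upalpha_j-\upalpha_k}/\br{\upalpha_j-\upalpha_l}}$ and implies nothing about the different combinations that arise after imposing the degeneracy relation. This non-vanishing is precisely the delicate point of the lemma (the outline stresses that it can no longer be argued ``by construction''), and the paper proves it separately in each sub-case: when $\upbeta_l\upalpha_k=\upbeta_k\upalpha_l$ the new form is $\log\abs{\upbeta_l}-\log\abs{\upbeta_k}$, whose non-vanishing uses $\upalpha_k\neq\upalpha_l$ (and $\abs{\upalpha_k}\neq\abs{\upalpha_l}$, via Lemma~\ref{lem: alphadiff}); in the sign-flipped case one needs the quantitative comparison $\abs{\upbeta_k\upalpha_j}\ll\abs{y}\abs{\upalpha_k}\abs{\upalpha_j}$ against $\abs{y}\abs{\upalpha_l}\abs{\upalpha_k}\ll\abs{2\upbeta_l\upalpha_k}$ to see that the candidate right-hand side $-\upbeta_k\upalpha_j/\br{2\upbeta_l\upalpha_k}$ has absolute value strictly below $1$. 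None of this, nor the verification that the coefficients really stay in $\set{0,\pm 1}$ and that the $n^{-\frac{c_1}{2}\uptau}$ bound survives without cancellation of the leading term, appears in your write-up; you explicitly defer it as ``bookkeeping''. (For what it is worth, in the ordering $\abs{\upalpha_k}>\abs{\upalpha_l}>\abs{\upalpha_j}$ with no sign flip your identity reads $x\br{\upalpha_k-\upalpha_l}=y\br{\upalpha_k\upalpha_l-\upalpha_l\upalpha_k}=0$, forcing $x=0$ and hence $\abs{y}=1$, a contradiction; but the ordering $\abs{\upalpha_k}>\abs{\upalpha_j}>\abs{\upalpha_l}$ does not collapse this way and genuinely requires the paper's analysis.) As it stands, the lemma is unproved in those sub-cases.
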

\begin{proof}
    If the linear form $\Uplambda'$ from Equation~\eqref{eq: lambdaprime} is already non-zero, the purported upper bound follows from factoring $\frac{\upalpha_u}{\upalpha_v}$ in Equation~\eqref{eq: LF} and shifting the remainder into the upper bound, which gives
    \[
        \abs{\Uplambda'} \ll \frac{1}{\abs{y}^2n^{ \frac{c_1}{2} \uptau}} + \log\abs{\frac{ 1 - \frac{\upalpha_{u'}}{\upalpha_u} }{1 - \frac{\upalpha_{v'}}{\upalpha_v}}}.
    \]

    Both quotients of two $\upalpha_i$ respectively on the right-hand side are smaller than $ n^{-c_1 \uptau}$ by Lemma~\ref{lem: alphadiff}, which gives the purported upper bound for $\abs{\Uplambda'}$, since $\log\abs{1 \pm n^{-c_1 \uptau}} < n^{-c_1 \uptau}$.
    
    Assume instead that $\Uplambda' = 0$, which holds if and only if $\abs{ \upbeta_l \upalpha_u } = \abs{\upbeta_k \upalpha_v}$. We now differentiate the cases for $(u,v)$ and the sign.

    \emph{Case $\abs{\upalpha_j} > \max\br{\abs{\upalpha_k}, \abs{\upalpha_l}}$:}
    In this case, $\upalpha_u=\upalpha_v=\upalpha_j$ cancels out and we have $\abs{\upbeta_l} = \abs{\upbeta_k}$. They cannot be equal, since $\upbeta_l = \upbeta_k$ implies $\upalpha_l = \upalpha_k$, which gives $s=t=0$ due to the multiplicative independence of $\uplambda_0, \uplambda_2$. Thus, $\upbeta_l = - \upbeta_k$.

    We substitute $\upbeta_l$ for $-\upbeta_k$ in Siegel's Identity~\eqref{eq: Siegel} and obtain
    \[
        \upbeta_j\br{\upalpha_k - \upalpha_l} + \upbeta_k \upalpha_l + 2 \upbeta_k \upalpha_j - \upbeta_k \upalpha_l = 0,
    \]
    which we transform into
    \[
        \frac{\upbeta_j \br{\upalpha_k - \upalpha_l}}{2\upbeta_k \upalpha_j} + \frac{\upalpha_l}{2\upalpha_j} + 1 = \frac{\upalpha_k}{2\upalpha_j}.
    \]
    
    Since $\abs{\upalpha_j}$ is the largest, the right-hand side is not $1$. We can bound, by Lemma~\ref{lem: alphadiff}, $\abs{\frac{\upalpha_l}{2\upalpha_j}} \ll n^{ -\frac{c_1}{2} \uptau}$. By a combination of Inequality~\eqref{eq: betakl-lb}-\eqref{eq: betaj-ub}, and Lemma~\ref{lem: alphadiff}-\ref{lem: alphamax}, we can bound
    \[
        \abs{ \frac{\upbeta_j \br{\upalpha_k - \upalpha_l}}{2\upbeta_k \upalpha_j} } \ll \frac{ \max\br{\abs{\upalpha_k}, \abs{\upalpha_l}} }{ \abs{y}^3 n^{ \frac{c_1}{2} \uptau} \abs{\upalpha_j}^2 } \ll \frac{1}{ \abs{y}^3 n^{ \frac{3c_1}{2} \uptau} }.
    \]

    We thus have that 
    \[
        0 < \abs{ \log\abs{\upalpha_k} - \log\abs{\upalpha_j} - \log 2 } \ll \frac{1}{n^{\frac{c_1}{2} \uptau}}.
    \]

    \emph{Case $\abs{\upalpha_k} > \abs{\upalpha_l} > \abs{\upalpha_j}$:} 
    In this case, we have $\upalpha_u = \upalpha_k$ and $\upalpha_v = \upalpha_l$ and thus $\abs{\upbeta_l \upalpha_k} = \abs{\upbeta_k \upalpha_l}$. If the absolute value cancels without a sign change, the term $\upbeta_k\upalpha_l - \upbeta_l\upalpha_k$ vanishes in Siegel's Identity~\eqref{eq: Siegel}. What remains is
    \[
        \upbeta_j \br{\upalpha_k - \upalpha_l} - \upbeta_k\upalpha_j + \upbeta_l\upalpha_j = 0,
    \]
    or
    \[
        -\frac{\upbeta_j \br{\upalpha_k - \upalpha_l}}{\upbeta_k \upalpha_j} + 1 = \frac{\upbeta_l}{\upbeta_k}.
    \]
    The right-hand side is again not $1$, since $\upalpha_l \neq \upalpha_k$. Analogously to the above case and then using Inequality~\eqref{eq: alphaj-y}, we have that
    \[
            \abs{ \frac{\upbeta_j \br{\upalpha_k - \upalpha_j} }{ \upbeta_k \upalpha_j } } \ll \frac{ \abs{\upalpha_k} }{ \abs{y}^3 n^{ \frac{c_1}{2} \uptau} \abs{\upalpha_k - \upalpha_j} \abs{\upalpha_j} } \ll \frac{1}{ \abs{y}^2 n^{\frac{c_1}{2} \uptau} },
    \]
    and thus
    \[
        0 < \abs{ \log\abs{\upbeta_l} - \log\abs{\upbeta_k} } < \frac{1}{n^{\frac{c_1}{2} \uptau}}.
    \]

    If the absolute value cancels with a sign change, then $\upbeta_k \upalpha_l - \upbeta_l \upalpha_k = -2\upbeta_l\upalpha_k$. Plugging this into Siegel's identity yields
    \[
        -\frac{ \upbeta_j \br{\upalpha_k - \upalpha_l} }{ 2\upbeta_l \upalpha_k } - \frac{\upalpha_j}{2 \upalpha_k} + 1 = - \frac{\upbeta_k\upalpha_j}{2\upbeta_l \upalpha_k}.
    \]

    The right-hand side is not $1$: On the one hand, we have that $\abs{ \upbeta_k \upalpha_j } \ll \abs{y}\abs{\upalpha_k-\upalpha_j}\abs{\upalpha_j} \ll \abs{y} \abs{\upalpha_k}\abs{\upalpha_j}$, and on the other hand that $ \abs{y} \abs{\upalpha_l}\abs{\upalpha_k} \ll \abs{y} \abs{\upalpha_l - \upalpha_j} \abs{\upalpha_k} \ll \abs{2 \upbeta_l \upalpha_k} $. Thus, the right-hand side is effectively bounded by $\abs{ \frac{\upalpha_j}{\upalpha_l} }$, which in this case is smaller than $1$, even taking into account the implied constants. We can similarly effectively bound the terms on the left-hand side by $n^{-\frac{c_1}{2}\uptau}$, from which it follows that
    \[
        0 < \abs{ \log\abs{\upbeta_k} - \log\abs{\upbeta_l} + \log\abs{\upalpha_j} - \log\abs{\upalpha_k} - \log 2 } < \frac{1}{n^{\frac{c_1}{2} \uptau}}.
    \]

    The case where $\abs{\upalpha_k} > \abs{\upalpha_j} > \abs{\upalpha_l}$ follows analogously.
\end{proof}

\begin{lemma}\label{lem: tau-bound}
    There exists an effectively computable constant $c_3 > 0$ such that
    \[
        \uptau < c_3 \log n \log\log n.
    \]
\end{lemma}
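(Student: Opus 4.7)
The plan is to apply the Baker--Wüstholz lower bound of Proposition~\ref{prop: BaWh} to the nonzero linear form $\Uplambda''$ supplied by Lemma~\ref{lem: non-zeroLF} and compare it with the upper bound $\abs{\Uplambda''} < n^{-\frac{c_1}{2}\uptau}$ already at our disposal.

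First I would rewrite $\Uplambda''$ as a $\Z$-linear combination of $\log\uplambda_0$, $\log\abs{\uplambda_2}$ and $\log 2$. The decomposition $\upbeta_0 = \pm\uplambda_0^{a}\uplambda_2^{b}$ together with its conjugates, as carried out at the start of the proof of Lemma~\ref{lem: logy-ub}, expresses every $\log\abs{\upbeta_i}$ as a $\Z$-linear combination of $\log\uplambda_0$ and $\log\abs{\uplambda_2}$ with coefficients $\Z$-linear in $(a,b)$; the proof of Lemma~\ref{lem: alphamax} does the analogous thing for every $\log\abs{\upalpha_i}$ with coefficients $\Z$-linear in $(s,t)$. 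Since $\Uplambda''$ has at most six summands with coefficients in $\set{0,\pm 1}$ by Lemma~\ref{lem: non-zeroLF}, one obtains
\[
    \Uplambda'' = A\log\uplambda_0 + B\log\abs{\uplambda_2} - C\log 2
\]
with $A,B\in\Z$, $C\in\set{-1,0,1}$ and $\max\br{\abs{A},\abs{B}} \ll \max\br{\abs{a},\abs{b},\uptau}$.

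Next, I would use Lemmas~\ref{lem: coeff-ub} and~\ref{lem: logy-ub} to bound
\[
    \max\br{\abs{A},\abs{B},\abs{C}} \ll \frac{\log\abs{y}}{\log n} + \uptau \ll \uptau\br{\log n}^2\br{\log\uptau + \log\log n} =: B_0,
\]
and apply Proposition~\ref{prop: BaWh} inside $K=\Q(\uplambda_0)$ of degree $D=3$ to the algebraic numbers $\uplambda_0,\uplambda_2,2$, whose heights are $\uptheta(\log n),\uptheta(\log n)$ and $\log 2$ respectively, by Equation~\eqref{eq: heightbound}. The resulting lower bound is
\[
    \log\abs{\Uplambda''} \geq -c\br{\log n}^2\log B_0 \gg -\br{\log n}^2\br{\log\uptau + \log\log n}.
\]
Combining this with $\log\abs{\Uplambda''} < -\frac{c_1}{2}\uptau\log n$ and cancelling one factor of $\log n$ produces an inequality of the shape
\[
    \uptau \ll \log n \br{\log\uptau + \log\log n}.
\]

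A standard bootstrap finishes the proof: if $\uptau\leq(\log n)^2$ then $\log\uptau\leq 2\log\log n$ and the asserted bound is immediate; otherwise $\uptau\ll \log n\log\uptau$, which upon taking logarithms forces $\log\uptau\ll\log\log n$, and re-inserting this gives $\uptau < c_3\log n\log\log n$ with an effective $c_3$. The chief technical obstacle is the coefficient bookkeeping in the first step: although $\abs{a},\abs{b}$ may a priori grow as fast as $\log\abs{y}/\log n$, the estimate of Lemma~\ref{lem: logy-ub} is exactly sharp enough to keep $\log B_0 = O(\log\uptau+\log\log n)$, so that the $\uptau\log n$ factor on the upper-bound side genuinely dominates the coefficient-log factor produced by Baker--Wüstholz.
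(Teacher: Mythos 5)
Your proposal is correct and follows essentially the same route as the paper: rewrite the nonzero form $\Uplambda''$ of Lemma~\ref{lem: non-zeroLF} as an integer combination of $\log\uplambda_0$, $\log\abs{\uplambda_2}$ and $\log 2$, bound the coefficients by Lemmas~\ref{lem: coeff-ub} and~\ref{lem: logy-ub}, apply Proposition~\ref{prop: BaWh} with heights $\uptheta(\log n)$, and compare with the upper bound $n^{-\frac{c_1}{2}\uptau}$ before resolving $\uptau \ll \log n\br{\log\uptau+\log\log n}$ by the same case distinction. The only differences are cosmetic (your explicit bootstrap versus the paper's split into $\uptau\ll\log n$ and $\log n\ll\uptau$), so nothing further is needed.
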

\begin{proof}
    By Lemma~\ref{lem: non-zeroLF}, we have a linear form in logarithms of $\upbeta_k, \upbeta_l, \upalpha_j,\upalpha_k,\upalpha_l, 2$ with coefficients in $\set{-1,0,1}$, for which
    \[
        0 < \abs{\Uplambda''} < \frac{1}{n^{ \frac{c_1}{2} \uptau}}
    \]
    holds. We proceed analogously to the proof of Lemma~\ref{lem: logy-ub} and write $\upbeta_0 = \pm\uplambda_0^a \uplambda_2^b$ and so on. We do the same for the $\upalpha_i$, i.e. $\upalpha_0 = \uplambda_0^s\uplambda_1^t = \uplambda_0^{s-t}\uplambda_2^{-t}$. We can then write $\Uplambda''$ alternatively as a linear form in logarithms of $\uplambda_0, \uplambda_2, 2$. 
    
    The coefficients are then linear combinations of $a,b, s,t$ and thus effectively bounded by $\frac{\log\abs{y}}{\log n} + \uptau$ by Lemma~\ref{lem: coeff-ub}. Applying the bound for $\log\abs{y}$ from Lemma~\ref{lem: logy-ub} gives, up to effective constants, the bound 
    \[
        \uptau \br{\log n}^2\br{\log\uptau + \log\log n}.
    \]
    
    The logarithmic heights of $\uplambda_0, \uplambda_2$ are effectively bounded by $\log n$, and the logarithmic height of $2$ is $\log 2$. 

    We plug this into Proposition~\ref{prop: BaWh} and get
    \[
        \uptau \log n \ll \br{\log n}^2 \log\br{ \uptau \br{\log n}^2 \br{ \log \uptau + \log\log n } }.
    \]

    If $\uptau \ll \log n$, then the proposed bound holds in particular. If $\log n \ll \uptau$ instead, then $\log\log n \ll \log\uptau$ as well and the right-hand side of the above inequality becomes
    \[
        \uptau \ll \log n \log\br{ \uptau^3 \log\uptau } \ll \log n \log \uptau,
    \]
    from which $\uptau \ll \log n \log\log n$ follows.
    
\end{proof}

If we use the bound $\uptau < c_3 \log n \log\log n$ and plug it into Lemma \ref{lem: logy-ub}, we get that
\begin{equation}\label{eq: logy-by-n}
    \log\abs{y} < c_4 \br{\log n}^4 \br{ \log\log n }^2,
\end{equation}
and plugging both bounds into Lemma \ref{lem: coeff-ub}, we get
\begin{equation}\label{eq: coeff-by-n}
    \max\br{ \abs{a}, \abs{b} } < c_5 \br{ \log n }^3 \br{ \log\log n }^2.
\end{equation}

All we have left to do is to bound $n$ by an absolute constant, which we do in the following

\begin{lemma}\label{lem: contradiction}
    There exists an effectively computable constant $\upkappa$ such that $n < \upkappa$.
\end{lemma}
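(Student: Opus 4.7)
I will assume $n$ is arbitrarily large and derive a contradiction to $|y| \geq 2$ from the linear form $\Uplambda''$ of Lemma~\ref{lem: non-zeroLF}. The key observation is that since $\log|\uplambda_2| = \uptheta(1/n)$ while the coefficients appearing in $\Uplambda''$ are only polylogarithmic in $n$, every $\uplambda_2$-contribution is negligible for $n$ large, so the dominant $\log\uplambda_0$-coefficient of $\Uplambda''$ must vanish, which will eventually force $\upbeta_j = \pm 1$, contradicting the upper bound~\eqref{eq: betaj-ub}.

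\textbf{Setup and elimination.} Using $\uplambda_0\uplambda_1\uplambda_2 = 1$ (the constant term of $f$) to write $\uplambda_1 = \uplambda_0^{-1}\uplambda_2^{-1}$, I would expand each $\upbeta_i$ and each $\upalpha_i$ as an explicit product of integer powers of $\uplambda_0$ and $\uplambda_2$, e.g.\ $\upbeta_0 = \pm\uplambda_0^a\uplambda_2^b$, $\upbeta_1 = \pm\uplambda_0^{b-a}\uplambda_2^{-a}$, $\upbeta_2 = \pm\uplambda_0^{-b}\uplambda_2^{a-b}$, and $\upalpha_0 = \uplambda_0^{s-t}\uplambda_2^{-t}$, $\upalpha_1 = \uplambda_0^{-s}\uplambda_2^{t-s}$, $\upalpha_2 = \uplambda_0^{t}\uplambda_2^{s}$. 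Substituting into $\Uplambda''$ rewrites it as
\[
    \Uplambda'' = A\log\uplambda_0 + B\log|\uplambda_2| - C\log 2,
\]
with $A, B \in \Z$ integer combinations of $a, b, s, t$ (depending on the case of Lemma~\ref{lem: non-zeroLF}) and $C \in \{0, 1\}$. By~\eqref{eq: coeff-by-n} and Lemma~\ref{lem: tau-bound}, $\max(|A|, |B|) \ll (\log n)^3(\log\log n)^2$, hence by Corollary~\ref{cor: root-logs} both $|B\log|\uplambda_2|| \ll (\log n)^3(\log\log n)^2/n$ and $|\Uplambda''| < n^{-c_1\uptau/2}$ are less than $1/8$ for $n$ sufficiently large. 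The triangle inequality then yields $|A\log\uplambda_0 - C\log 2| < 1/4$, which forces $A = 0$ (since $A \in \Z$ and $\log\uplambda_0 > \log n$), and then $C = 0$ (since $|C|\log 2 < 1/4$ and $C \in \{0, 1\}$).

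\textbf{Case analysis and conclusion.} In the case of Lemma~\ref{lem: non-zeroLF} where $|\upalpha_j|$ is maximal, as well as in both sign-change subcases, $C = 1$ is built into $\Uplambda''$, immediately contradicting $C = 0$; alternatively, in the maximal case the coefficient $A$ is one of $\pm(2s-t), \pm(2t-s), \pm(s+t)$, all nonzero by Condition~\eqref{eq: main-cond}. Only the no-sign-change subcases remain, where $\Uplambda'' = \pm(\log|\upbeta_l| - \log|\upbeta_k|)$. Here $A$ is an integer linear combination of $a$ and $b$, and case-by-case on $j \in \{0,1,2\}$ the relation $A = 0$ yields a proportionality ($a = 2b$, $b = -a$, or $b = 2a$), after which the remaining coefficient $B$ reduces to $\pm 3$ times the surviving variable. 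The bound $|\Uplambda''| = |B\log|\uplambda_2|| < n^{-c_1\uptau/2}$ combined with $|\log|\uplambda_2|| \geq (1-o(1))/n$ gives $|B| \ll n^{1 - c_1\uptau/2}$. Since Condition~\eqref{eq: main-cond} gives $\upvarepsilon\uptau > 2$ strictly and Lemma~\ref{lem: alphadiff} lets $c_1$ be chosen arbitrarily close to $\upvarepsilon$ for $n$ large, we have $c_1\uptau > 2$ strictly, so $|B| < 1$ and therefore $B = 0$. Together with $A = 0$ this forces $a = b = 0$, i.e.\ $\upbeta_j = \pm 1$; but Inequality~\eqref{eq: betaj-ub} then gives $1 \ll |y|^{-2} n^{-c_1\uptau/2}$, so $|y|^2 \ll n^{-c_1\uptau/2} < 1$ for $n$ large, contradicting $|y| \geq 2$. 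The main obstacle is the bookkeeping: each choice of $j$ and each relative ordering of the $|\upalpha_i|$ produces its own expressions for $A$ and $B$, and one must verify that the uniform conclusion $\upbeta_j = \pm 1$ is forced in every surviving subcase; once that is done, everything else follows mechanically from the already-established bounds and from the fact that $\log|\uplambda_2|$ is negligible compared to $\log\uplambda_0$.
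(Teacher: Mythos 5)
Your elimination step (the $\uplambda_2$-contribution is $O\br{\br{\log n}^3\br{\log\log n}^2/n}$, so the integer coefficient of $\log\uplambda_0$ must vanish, then the $\log 2$ term, then the $\log\abs{\uplambda_2}$ coefficient) is exactly the paper's mechanism, and your treatment of the subcases of Lemma~\ref{lem: non-zeroLF} with a $\log 2$ term, as well as of the subcase $\Uplambda''=\pm\br{\log\abs{\upbeta_l}-\log\abs{\upbeta_k}}$, is sound. But your case analysis has a genuine gap: it only covers the degenerate branch of Lemma~\ref{lem: non-zeroLF} in which $\Uplambda'=0$, together with the situation $u=v=j$. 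The generic branch --- $\Uplambda'\neq 0$ with $\abs{\upalpha_j}$ not maximal --- gives $\Uplambda''=\Uplambda'=\log\abs{\frac{\upbeta_l}{\upbeta_k}}+\log\abs{\frac{\upalpha_u}{\upalpha_v}}$ with $u\neq v$, which also has $C=0$ but is not of the form you retain. There the coefficients $A,B$ are integer combinations of $a,b,s,t$, and $A=B=0$ does \emph{not} force $a=b=0$: for instance, for $j=0$, $u=0$, $v=2$ one gets $a-2b+s-2t=0$ and $2a-b-s-t=0$, i.e. $a=s$, $b=s-t$. So the uniform conclusion you aim for ($\upbeta=\pm1$, contradicting~\eqref{eq: betaj-ub}) is simply false in these surviving cases; it is not a matter of bookkeeping.

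To close these cases one needs a different contradiction, and this is where the bulk of the paper's proof lives: the assumed ordering of the $\abs{\upalpha_i}$ (which determines $u,v$) pins down the signs of $s$, $t$ and their combinations once $n$ is large enough that $\log\abs{\uplambda_2}=\uptheta\br{\frac 1n}$ cannot flip inequalities; then, using Equation~\eqref{eq: logy-by-logbetak} and Lemma~\ref{lem: alphadiff}, one approximates $\log\abs{y}=\log\abs{\upbeta_l}-\log\abs{\upalpha_v}+O\br{n^{-\frac{c_1}{2}\uptau}}$, substitutes the solved values of $a,b$ in terms of $s,t$, and finds that the $\log\uplambda_0$-coefficient of this expression is $-s$, $t$ or $0$, which under the established sign constraints yields $\log\abs{y}<0$ (or $\log\abs{y}<\log 2$), contradicting $\abs{y}\geq 2$. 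Without this additional sign analysis of $\log\abs{y}$ your argument does not reach a contradiction in the generic case, so the proposal as written does not prove the lemma.
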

\begin{proof}
    We assume $n \geq \upkappa$ is sufficiently large, such that the powers of $\uplambda_2$ do not influence the following arguments.

    We first recall the forms of the $\upbeta_i, \upalpha_i$ in terms of $\uplambda_0,\uplambda_2$, i.e. that $\upbeta_0 = \uplambda_0^a\uplambda_2^b, \upbeta_1 = \uplambda_0^{-a+b}\uplambda_2^{-a}$ and $\upbeta_2 = \uplambda_0^{-b}\uplambda_2^{a-b}$, while $\upalpha_0 = \uplambda_0^{s-t}\uplambda_2^{-t}, \upalpha_1 = \uplambda_0^{-s}\uplambda_2^{-s+t}$ and $\upalpha_2 = \uplambda_0^t\uplambda_2^s$. We differentiate the cases for $j$, i.e. which algebraic number $\upalpha_i$ gets approximated the best by $\frac{x}{y}$, resp. which $\upbeta_i$ is the smallest in absolute value, and $u,v$, i.e. for fixed $j$, which algebraic number is the larger in absolute value between $\upalpha_j, \upalpha_k$ or $\upalpha_j, \upalpha_l$ respectively. We start with the case $j = 0$ and choose $(k,l) = (1,2)$.

    \emph{Case $u = v = 0$:}
    In this case, $\abs{\upalpha_0} = \max\br{\abs{\upalpha_0}, \abs{\upalpha_1}, \abs{\upalpha_2}} > 1$ in particular. Taking the logarithm yields $\br{s-t} \log\uplambda_0 - t \log\abs{\uplambda_2} > 0$. But we have $\abs{t} < c_3 \log n \log\log n$ by Lemma~\ref{lem: tau-bound}, and $\log\uplambda_2 = \uptheta\br{\frac{1}{n}}$ by Corollary~\ref{cor: root-logs}. For $n \geq \upkappa$, the second summand thus does not influence the sign of the expression---we derive $s-t > 0$ or $-s+t < 0$.

    It is also the case that $\log\abs{\frac{\upalpha_u}{\upalpha_v}} = 0$, since $\upalpha_u = \upalpha_v = \upalpha_0$. The absolute value of the linear form $\Uplambda'$ of Equation~\eqref{eq: lambdaprime} is thus
    \begin{equation}\label{eq: LF-trick}
        \abs{\Uplambda'} = \abs{ \log\abs{\frac{\upbeta_1}{\upbeta_2}} } = \abs{ \br{a-2b} \log\uplambda_0 + \br{2a-b} \log\uplambda_2 } < \frac{1}{n^{\frac{c_1}{2} \uptau}}.
    \end{equation}

    Either $a - 2b = 0$, or we can deduce from the above equation, with $\abs{a-2b} \geq 1$ and $\log\uplambda_0 > \log n$, that
    \[
        \log n < \abs{a-2b} \log\uplambda_0 < \abs{2a-b} \log\abs{\uplambda_2} + \frac{1}{n^{\frac{c_1}{2} \uptau}}.
    \]

    But $\log\abs{\uplambda_2} < \frac{1}{n}+\frac{1}{n^2} < \frac{2}{n}$ by Corollary~\ref{cor: root-logs}, and thus
    \[
        \log n < \abs{2a-b} \log\abs{\uplambda_2} + \frac{1}{n^{ \frac{c_1}{2} \uptau}} < \frac{ 2\abs{2a-b}}{n} + \frac{1}{n^{\frac{c_1}{2} \uptau}}.
    \]

    We compare the left-hand and right-hand sides of this inequality and since by Inequality~\eqref{eq: coeff-by-n}
    \[
        \abs{2a-b} < 3 \max\br{ \abs{a}, \abs{b} } < 3 c_5 \br{ \log n }^3 \br{\log \log n}^2
    \]
    find, that it cannot hold if $n \geq \upkappa$ is sufficiently large. Thus, $a-2b = 0$.

    With $a-2b = 0$, Equation~\eqref{eq: LF-trick} becomes
    \[
        \abs{2a - b} \log\abs{\uplambda_2} < \frac{1}{ n^{\frac{c_1}{2} \uptau} },
    \]

    and by Corollary~\ref{cor: root-logs}, if $2a-b \neq 0$ this then implies
    \[
        \frac{1}{n} - \frac{2}{n^2} < \frac{1}{ n^{\frac{c_1}{2} \uptau} }.
    \]

    This gives a contradiction and thus $2a-b = 0$, unless $\frac{c_1}{2} \uptau \leq 1$. However, if $n\geq \upkappa$ is sufficiently large, the constant $c_1$ can be chosen to be arbitrarily close to $\upvarepsilon$ by Lemma~\ref{lem: alphadiff}. So $\frac{c_1}{2} \uptau \leq 1$ implies $\upvarepsilon \uptau \leq 2$ and contradicts Condition~\eqref{eq: main-cond}.
 
    We thus have both $a-2b = 0$ and $2a-b = 0$, which implies $a = b = 0$. But this in turn means that $\upbeta_0 =  \uplambda_0^a \uplambda_2^b = 1$ and thus $x - \upalpha_0 y = 1$. Since $x,y$ are integers and $\upalpha_0$ irrational, this implies $x=1,y=0$ and contradicts $\abs{y} \geq 2$. Our assumption that $n \geq \upkappa$ was thus false.

    \emph{Case $u = 0, v = 2$:} Here, the relation between the sizes of the algebraic numbers $\upalpha_i$ is 
    \[
        1 < \abs{\upalpha_2} \text{ and } \abs{\upalpha_2} > \abs{\upalpha_0} > \abs{\upalpha_1}.
    \]
    Taking the logarithm and using that for $n \geq \upkappa$, the summand with $\log\abs{\uplambda_2}$ is small enough not to influence the inequality relations, we get from this that $0 < t$ and $t > s-t > -s$. This implies $s>0$ in particular.

    Furthermore, $\log\abs{\frac{\upalpha_u}{\upalpha_v}} = \log\abs{\frac{\upalpha_0}{\upalpha_2}} = \br{s-2t}\log\uplambda_0 + \br{-s-t}\log\abs{\uplambda_2}$. We follow the same process as in the previous case, i.e. first assume that the coefficient of $\log\abs{\uplambda_0}$ is non-zero and rewrite
    \[
        \abs{\Uplambda'} = \abs{ \log\abs{\frac{\upbeta_1}{\upbeta_2}} + \log\abs{ \frac{\upalpha_0}{\upalpha_2} } } = \abs{ \br{a-2b+s-2t} \log\abs{\uplambda_0} + \br{2a-b-s-t} \log\abs{\uplambda_2} } < \frac{1}{n^{ \frac{c_1}{2} \uptau}}
    \]
    into
    \[
        \log n < \frac{ 2\abs{2a-b-s-t} }{n} + \frac{1}{n^{ \frac{c_1}{2} \uptau}}.
    \]

    But this cannot hold, since by Lemma~\ref{lem: tau-bound} and Equation~\eqref{eq: coeff-by-n},
    \[
        \abs{2a-b-s-t} \ll \max\br{ \abs{a}, \abs{b} } + \uptau \ll \br{ \log n }^3\br{ \log\log n }^2.
    \]

    The assumption $a-2b+s-2t \neq 0$ was thus false, so the term with $\log\uplambda_0$ vanishes in $\Uplambda'$. 
    
    We then assume that the coefficient $2a-b-s-t$ of $\log\abs{\uplambda_2}$ does not vanish and rewrite $\abs{\Uplambda'} < n^{- \frac{c_1}{2} \uptau}$ into
    \[
        \frac{1}{n} - \frac{2}{n^2} < \abs{ 2a-b-s-t }\log\abs{\uplambda_2} < \frac{1}{n^{\frac{c_1}{2} \uptau}}
    \]
    and comparing the left-hand and right-hand sides gives the contradiction, thus $2a-b-s-t = 0$ as well. We rewrite the equations $a-2b+s-2t = 2a-b-s-t = 0$ to express $a,b$ in terms of $s,t$ and get $a = s, b = s-t$.

    We again use this information on $a,b$ to contradict $\abs{y} \geq 2$. Recall that by Equation~\eqref{eq: logy-by-logbetak}, we can approximate $\log\abs{y}$, up to an error of order $O\br{n^{- \frac{c_1}{2} \uptau}}$, very well by either $\log\abs{\upbeta_k} - \log\abs{\upalpha_j - \upalpha_k}$ or $\log\abs{\upbeta_l} - \log\abs{\upalpha_j - \upalpha_l}$. We are in the case $j=0$ and take a closer look at the approximation for $l=2$, i.e. $\log\abs{\upbeta_2} - \log\abs{\upalpha_0 - \upalpha_2}$. By Equation~\eqref{cor: root-logs}, using Lemma~\ref{lem: alphadiff} gives $\abs{\upalpha_0 - \upalpha_2} = \abs{\upalpha_2} \br{ 1 + O\br{n^{-c_1 \uptau}} }$ and thus
    \begin{align*}
        \log\abs{y} &= \log\abs{\upbeta_2} - \log\abs{\upalpha_2} + O\br{ n^{-\frac{c_1}{2} \uptau} } \\ 
        &= \br{-b-t}\log\uplambda_0 + \br{a-b-s} \log\abs{\uplambda_2} + O\br{ n^{-\frac{c_1}{2} \uptau} } \\
        &= -s \log\uplambda_0 + \br{-s+t} \log\abs{\uplambda_2} + O\br{ n^{-\frac{c_1}{2} \uptau} }.
    \end{align*}
    
    But $-s < 0$ in this case, so if $n \geq \upkappa$ is sufficiently large such that neither the second addend nor the omitted error terms influence the sign, this reads as $\log\abs{y} < 0$ and contradicts $\abs{y} \geq 2$.

    \emph{Case $u=1, v=0$:} The relation between the sizes of the algebraic numbers, and thus between the exponents under the condition that $n \geq \upkappa$, is
    \[
        1 < \abs{\upalpha_1} \text{ and } \abs{\upalpha_1} > \abs{\upalpha_0} > \abs{\upalpha_2} \iff  0 < -s \text{ and } -s > s-t > t,
    \]
    which implies $t < 0$. 
    
    We also write $\log\abs{\frac{\upalpha_u}{\upalpha_v}} = \log\abs{\frac{\upalpha_1}{\upalpha_0}} = \br{-2s+t} \log\uplambda_0 + \br{-s + 2t}\log\abs{\uplambda_2}$. Setting up $\Uplambda'$ and its upper bound and assuming the coefficient of $\log\uplambda_0$ yields
    \[
        \log n < \abs{a-2b-2s+t} \log\abs{\uplambda_0} < \frac{2 \abs{2a-b-s+2t}}{n} + \frac{1}{n^{ \frac{c_1}{2} \uptau} }
    \]
    and gives the contradiction for sufficiently large $n \geq \upkappa$, hence $a-2b-2s+t = 0$.

    Doing the same for the coefficient of $\log\abs{\uplambda_2}$ gives
    \[
        \frac{1}{n} - \frac{2}{n^2} < \abs{2a-b-s+2t} \log\abs{\uplambda_2} < \frac{1}{n^{ \frac{c_1}{2} \uptau}}
    \]
    and the contradiction to assuming $2a-b-s+2t \neq 0$. Hence both terms equal $0$, and we again express $a,b$ in terms of $s,t$ and get $a=t, b=-s$.
    
    We again approximate $\log\abs{y}$ by $\log\abs{\upbeta_2} - \log\abs{\upalpha_0 - \upalpha_2}$. Since $v = 0$, we approximate $\log\abs{\upalpha_0 - \upalpha_2}$ by $\log\abs{\upalpha_0}$ this time, which gives
    \begin{align*}
        \log\abs{y} &= \log\abs{\upbeta_2} - \log\abs{\upalpha_0} + O\br{ n^{-\frac{c_1}{2} \uptau} } \\
        &= \br{-b-s+t}\log\uplambda_0 + \br{a-b+t}\log\abs{\uplambda_2} + O\br{ n^{-\frac{c_1}{2} \uptau} } \\
        &= t \log\uplambda_0 + s\log\abs{\uplambda_2} + O\br{ n^{-\frac{c_1}{2} \uptau} },
    \end{align*}
    if we plug in our expressions for $a,b$. But since $t<0$, this gives $\log\abs{y} < 0$ and contradicts $\abs{y} \geq 2$.

    \emph{Case $u = 1, v=2$:} In this case, we get no chain of inequalities between the algebraic numbers $\upalpha_0,\upalpha_1, \upalpha_2$, but as it turns out, we won't need it.
    
    We write $\log\abs{\frac{\upalpha_u}{\upalpha_v}} = \log\abs{\frac{\upalpha_1}{\upalpha_2}} = \br{-s-t} \log\uplambda_0 + \br{-2s+t} \log\abs{\uplambda_2}$ and plug this into our linear form $\Uplambda'$. We again first argue that $a-2b-s-t = 0$ and then do the same for the coefficient of $\log\abs{\uplambda_2}$, i.e. $2a-b-2s+t = 0$. We deduce $a = s-t, b = -t$. 
    
    Choosing again to approximate $\log\abs{y}$ by $\log\abs{\upbeta_2} - \log\abs{\upalpha_0 - \upalpha_2}$, we have $\log\abs{\upalpha_0 - \upalpha_2} \approx \log\abs{\upalpha_2}$ and thus
    \begin{align*}
        \log\abs{y} &= \log\abs{\upbeta_2} - \log\abs{\upalpha_0 - \upalpha_2} + O\br{n^{-\frac{c_1}{2} \uptau}} \\
        &= \log\abs{\upbeta_2} - \log\abs{\upalpha_2} + O\br{ n^{ -\frac{c_1}{2} \uptau } }
        = \underbrace{\br{-b-t}}_{=0} \log\uplambda_0 + \underbrace{\br{a-b-s}}_{=0} \log\uplambda_2 + O\br{ n^{-\frac{c_1}{2} \uptau} } \\
        &= O\br{ n^{-\frac{c_1}{2} \uptau} }.
    \end{align*}
    
    Thus $\log\abs{y}$ is entirely described by the error $O\br{n^{ -\frac{c_1}{2} \uptau }}$ that we made. But if $n \geq \upkappa$ is sufficiently large, this error is bounded by any constant, say, $\log 2$, and we can thus deduce $\log\abs{y} < \log 2$, which contradicts $\abs{y} \geq 2$.

    The other cases, where $j=1$ or $j=2$, follow analogously. The sub-cases for $u,v$ depend on the particular $j$ and choice for $(k,l)$.
\end{proof}

\par\medskip

By this Lemma \ref{lem: contradiction}, we have deduced an effectively computable constant $\upkappa$, which depends implicitly only on $\upvarepsilon$ (since the constant $c_1$ in Lemma~\ref{lem: alphadiff} does), such that $n < \upkappa$. Plugging this into Lemma~\ref{lem: tau-bound} gives fixed upper bound for $\uptau$, and into Inequality~\eqref{eq: logy-by-n} a bound for $\log\abs{y}$, and thus $\abs{y}$. And if everything else is bounded, so must $\abs{x}$. By abuse of notation, we shall call $\upkappa$ the bound that holds for all these parameters, thus concluding the proof of Theorem~\ref{thm: main}.

\printbibliography

@Article{Bak68,
 Author = {A. {Baker}},
 Title = {{Contributions to the theory of diophantine equations. I: On the representation of integers by binary forms}},
 FJournal = {{Philosophical Transactions of the Royal Society of London. Ser. A}},
 Journal = {{Philos. Trans. R. Soc. Lond., Ser. A}},
 ISSN = {0080-4614},
 Volume = {263},
 Pages = {173--191},
 Year = {1968},
 Publisher = {The Royal Society of London, London},
 DOI = {10.1098/rsta.1968.0010},
 Zbl = {0157.09702}
}

@article{LPV99,
 ISSN = {00029947},
 URL = {http://www.jstor.org/stable/117836},
 author = {Günter Lettl and Attila Pethő and Paul Voutier},
 journal = {Transactions of the American Mathematical Society},
 number = {5},
 pages = {1871--1894},
 publisher = {American Mathematical Society},
 title = {Simple Families of Thue Inequalities},
 volume = {351},
 year = {1999}
}

@Article{Tho90,
 Author = {Thomas, Emery},
 Title = {Complete solutions to a family of cubic {Diophantine} equations},
 FJournal = {Journal of Number Theory},
 Journal = {J. Number Theory},
 ISSN = {0022-314X},
 Volume = {34},
 Number = {2},
 Pages = {235--250},
 Year = {1990},
 DOI = {10.1016/0022-314X(90)90154-J},
 Keywords = {11D25},
 zbMATH = {4142110},
 Zbl = {0697.10011}
}

@Article{LeWa15,
 Author = {Levesque, Claude and Waldschmidt, Michel},
 Title = {A family of {Thue} equations involving powers of units of the simplest cubic fields},
 FJournal = {Journal de Th{\'e}orie des Nombres de Bordeaux},
 Journal = {J. Th{\'e}or. Nombres Bordx.},
 ISSN = {1246-7405},
 Volume = {27},
 Number = {2},
 Pages = {537--563},
 Year = {2015},
 DOI = {10.5802/jtnb.913},
 Keywords = {11D59,11D25,11J86},
 zbMATH = {6504492},
 Zbl = {1395.11059}
}

@Article{BuGy96,
	Author = {Yann {Bugeaud} and K\'alm\'an {Gy\H{o}ry}},
	Title = {{Bounds for the solutions of Thue-Mahler equations and norm form equations}},
	FJournal = {{Acta Arithmetica}},
	Journal = {{Acta Arith.}},
	ISSN = {0065-1036},
	Volume = {74},
	Number = {3},
	Pages = {273--292},
	Year = {1996},
	Publisher = {Polish Academy of Sciences (Polska Akademia Nauk - PAN), Institute of Mathematics (Instytut Matematyczny), Warsaw},
	DOI = {10.4064/aa-74-3-273-292},
	MSC2010 = {11D57 11J13},
	Zbl = {0861.11024}
}

@Article{Tho79,
 Author = {Emery {Thomas}},
 Title = {{Fundamental units for orders in certain cubic number fields}},
 FJournal = {{Journal f\"ur die Reine und Angewandte Mathematik}},
 Journal = {{J. Reine Angew. Math.}},
 ISSN = {0075-4102},
 Volume = {310},
 Pages = {33--55},
 Year = {1979},
 Publisher = {De Gruyter, Berlin},
 MSC2010 = {11R27 11R16 11R32 11R52},
 Zbl = {0427.12005}
}

@Article{Bomb82,
	Author = {E. Bombieri},
	Title = {{On the Thue-Siegel-Dyson theorem}},
	FJournal = {{Journal f\"ur die Reine und Angewandte Mathematik}},
	Journal = {{Acta Math}},
	Volume = {148},
	Pages = {255--296},
	Year = {1982},
	DOI = {10.1007/BF02392731}
}

@inbook{Smyth08,
    series={London Mathematical Society Lecture Note Series},
    title={The Mahler measure of algebraic numbers: a survey},
    DOI={10.1017/CBO9780511721274.021},
    booktitle={Number Theory and Polynomials},
    publisher={Cambridge University Press},
    author={Smyth, Chris},
    editor={McKee, James and Smyth, ChrisEditors},
    year={2008},
    pages={322–349},
    collection={London Mathematical Society Lecture Note Series}
}

@Article{BaWh93,
	Author = {A. {Baker} and G. {W\"ustholz}},
	Title = {{Logarithmic forms and group varieties}},
	FJournal = {{Journal f\"ur die Reine und Angewandte Mathematik}},
	Journal = {{J. Reine Angew. Math.}},
	ISSN = {0075-4102},
	Volume = {442},
	Pages = {19--62},
	Year = {1993},
	Publisher = {De Gruyter, Berlin},
	DOI = {10.1515/crll.1993.442.19},
	MSC2010 = {11J86 14L10},
	Zbl = {0788.11026}
}

\end{document}